\newcommand\op{\operatorname{op}}
\newcommand\Alg{\operatorname{\bf Alg}}
\newcommand\Set{\operatorname{\bf Set}}
\newcommand\Pos{\operatorname{\bf Pos}}
\newcommand{\id}{\operatorname{id}}
\newcommand\obj{\operatorname{obj}}
\newcommand\coeq{\operatorname{coeq}}
\newcommand\ck{\mathcal {K}}
\newcommand\ca{\mathcal {A}}
\newcommand\cl{\mathcal {L}}
\newcommand\cv{\mathcal {V}}
\newcommand\N{\mathbb{N}}
\newcommand\R{\mathbb{R}}
\newtheorem{theorem}{Theorem}[section]
\newtheorem{lemma}[theorem]{Lemma}
\newtheorem*{lem}{Lemma}
\newtheorem{birk}[theorem]{Birkhoff Variety Theorem}
\newtheorem{prop}[theorem]{Proposition}
\newtheorem{corollary}[theorem]{Corollary}
\theoremstyle{definition}
\newtheorem{defi}[theorem]{Definition}
\newtheorem{example}[theorem]{Example}
\newtheorem{remark}[theorem]{Remark}
\newtheorem{constr}[theorem]{Construction}
\newtheorem{nota}[theorem]{Notation}
\numberwithin{equation}{section}
\begin{document}

	
	\title[Categories which are varieties]{Categories which are varieties of classical or ordered algebras}
	\author[Ji{\v{r}}{\'{\i}} Ad{\'{a}}mek]{Ji{\v{r}}{\'{\i}} Ad{\'{a}}mek$^{^{\ast)}}$}
	\dedicatory{\rm Czech Technical University, Prag\\
		Technical University of Braunschweig}

	\dedicatory{Dedicated to the memory  of Bill Lawvere}
	
	\begin{abstract}
		Following ideas of Lawvere and Linton we prove that  classical varieties are precisely the exact categories with a varietal generator. This means a strong generator which is abstractly finite and regularly projective.

		An analogous characterization of varieties of ordered algebras is also presented. We work  with order-enriched categories, and introduce the concept of subexact category and subregular projective  (corresponding naturally to the ordinary case). 
		Varieties of ordered algebras are precisely the subexact categories with a subvarietal generator. This means a strong generator which is abstractly finite and subregularly projective.
	\end{abstract}
	\footnote{$^{^\ast}$ Supported by the Grant Agency of Czech Republic, Grant No. 22-02964S.}
	
	\maketitle
	
	\section{Introduction}\label{sec1}
	One of the fundamental achievements of the thesis of Bill Lawvere was a characterization of categories equivalent to varieties of (finitary, one-sorted) algebras.
	He introduced the concept of an abstractly finite object $G$ (weaker than the concept of a finitely generated object, later used by Gabriel and Ulmer): every morphism from $G$ to its copower factorizes through a finite subcopower. 
	Lawvere formulated a theorem stating that a category is equivalent to a variety iff it has
	\begin{enumerate}
		\item[(1)] Finite limits.
		
		\item[(2)] Effective congruences.
		
		\item[(3)] A generator  with copowers which is  abstractly finite and regularly projective (its hom-functor preserves regular epimorphisms).
	\end{enumerate}
	Unfortunately, a small correction is needed: in (1) the existence of coequalizers should  be added (since Lawvere uses them twice in his proof), and the generator in (3) needs to be regular (also used in that proof). A category satisfying the three conditions above which, however, is not equivalent to a variety is presented below (Example \ref{E:cont}).
	
	Thus Lawvere's, very elegant, proof is a verification of  the following theorem.
	
	\begin{theorem}\label{th1.1}
		A category is equivalent to a variety iff it has 
		\begin{enumerate}
			\item[(1)] Finite limits and coequalizers.
			
			\item[(2)] Effective congruences.
			
			\item[(3)] A regular generator $G$  with copowers which is  an abstractly finite regular projective.
		\end{enumerate}
	\end{theorem}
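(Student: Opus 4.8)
The plan is to prove both directions of the equivalence. The "only if" direction is essentially bookkeeping: given a variety $\Var(\T)$ of algebras for a finitary signature/theory $\T$, I would exhibit the required structure directly. Finite limits, coequalizers, and effective congruences hold because $\Var(\T)$ is a complete, cocomplete, Barr-exact category (kernel pairs are effective congruences, quotients by congruences exist). For the generator I would take the free algebra $G = F\mathbf{1}$ on one generator. Its copowers are the free algebras $Fn$ on $n$ generators, the hom-functor $\Var(\T)(G,-)$ is the forgetful functor $\U$ (by freeness, $\Var(\T)(F\mathbf 1, A)\cong \U A$), and $\U$ preserves regular epimorphisms (surjective homomorphisms) and reflects isomorphisms, giving regularity and strong-generation of $G$. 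Abstract finiteness of $G$ holds because a homomorphism $F\mathbf 1 \to \coprod_{i\in I} F\mathbf 1 = FI$ picks out a single element of the free algebra on $I$, and any element of a free algebra uses only finitely many generators, hence factors through a finite subcopower $FJ$ with $J\subseteq I$ finite.

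The substance is the "if" direction: from the three abstract conditions I must reconstruct a variety equivalent to $\ck$. Following Lawvere and Linton, the plan is to build a finitary algebraic theory from the generator $G$ and show $\ck$ is equivalent to its models. Concretely, I would let the objects be the finite copowers $n\cdot G = \coprod_n G$ (for $n\in\N$) and form the full subcategory $\ct^{\op}$ on these copowers inside $\ck$; the Lawvere theory $\ct$ is its dual, with composition induced by the universal property of copowers. I would then define the comparison functor $E\colon \ck \to \Mod\,\ct$ sending $A$ to the hom-functor $\ck(-,A)$ restricted to $\ct^{\op}$, i.e. $n\mapsto \ck(n\cdot G, A)\cong \ck(G,A)^n$, which is automatically a product-preserving functor $\ct\to\Set$, hence a $\ct$-model. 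The goal is to prove $E$ is an equivalence of categories.

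The heart of the proof, and the main obstacle, is showing $E$ is full, faithful, and essentially surjective, and this is exactly where each hypothesis is consumed. Faithfulness and fullness of $E$ follow from $G$ being a strong (regular) generator together with the reconstruction of every object as a coequalizer of maps between copowers of $G$: since $G$ is a regular generator with copowers, each $A$ admits a canonical presentation $q\cdot G \rightrightarrows p\cdot G \twoheadrightarrow A$ by a (possibly infinite) coequalizer, and regular projectivity of $G$ ensures $\ck(G,-)$ carries this coequalizer to a coequalizer in $\Set$, so that a model is determined by and morphisms are computed from the $G$-theory. Abstract finiteness of $G$ is precisely what guarantees the theory is \emph{finitary} — that a morphism out of an infinite copower factors through a finite subcopower — so that models over $\ct$ coincide with a finitary variety rather than an infinitary one. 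For essential surjectivity I must show every product-preserving functor $M\colon \ct\to\Set$ is isomorphic to some $EA$; here the plan is to write $M$ as a coequalizer of representables in $\Mod\,\ct$ (every model is a canonical colimit of representables), transport this presentation back along the finite copowers into $\ck$, and use existence of coequalizers together with effectivity of congruences to realize the corresponding quotient object $A$ in $\ck$ with $EA\cong M$. The delicate point, where effective congruences do the essential work, is checking that the kernel-pair/coequalizer data computed in $\Set$ via the theory actually glues to an honest object of $\ck$ and that $E$ preserves the relevant (regular epi, coequalizer) structure; verifying that $E$ reflects and preserves these exactness features, and that no infinitary pathology intrudes, is the step I expect to require the most care.
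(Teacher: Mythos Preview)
Your outline is essentially Lawvere's original strategy: build the Lawvere theory $\ct$ on the finite copowers of $G$, form the comparison $E\colon\ck\to\Mod\ct$, and argue that $E$ is an equivalence by exploiting the canonical presentation of each object as a coequalizer of copowers of $G$. This is a correct plan, and your identification of where each hypothesis is consumed (regular generator and regular projectivity for fullness/faithfulness, abstract finiteness for finitariness of $\ct$, coequalizers and effective congruences for essential surjectivity) is accurate.

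The paper, however, does not argue this way. It proves the result (in the sharper form of Theorem~\ref{th1.3}) by appealing to Linton's monadicity criterion: one sets $U=\ck(G,-)\colon\ck\to\Set$, observes that copowers of $G$ give the left adjoint, and then verifies directly that $U$ preserves and reflects congruences and regular epimorphisms. Preservation of regular epis is exactly regular projectivity of $G$; reflection of regular epis uses that $G$ is a strong generator together with kernel pairs and their coequalizers; reflection of congruences is handled by Proposition~\ref{L:G} (a congruence with respect to $G$ is already a congruence when $G$ is a regular generator), and effectivity enters only to connect this with the hypothesis. Finitariness of the resulting monad is then deduced from abstract finiteness via Lemma~\ref{L:fg}. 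Compared with your route, the Linton approach avoids having to construct an inverse to $E$ by hand: the essential-surjectivity step that you flag as ``the most care'' is absorbed into the general machinery of monadicity, so one never has to realize an arbitrary $\ct$-model inside $\ck$ explicitly. Your approach, on the other hand, makes the Lawvere-theory structure and the role of abstract finiteness more visible, at the cost of a longer exactness verification for essential surjectivity.
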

	Actually, Lawvere worked in (2) with congruences effective with respect to $G$, but we prove in Proposition \ref{L:G} that this makes no difference in case $G$ is a regular generator. 
	An improved version of the above theorem was presented  in \cite{ARV}: kernel pairs and reflexive  coequalizers are sufficient in (1), and strong (rather than regular) generator in (3). This leads us to the following
	
	\begin{defi}\label{D:vg}
		A \emph{varietal generator} is a strong generator with copowers which is an abstractly finite regular projective.
	\end{defi} 
	
	By applying Linton's characterization of monadicity over $\Set$, we
	present a shorter proof and 
	make one further simplification step (Theorem \ref{T:main} below): in (1) coequalizers of kernel pairs are sufficient. This corresponds well to Barr's concept of an exact category (Def. \ref{D:Barr} below):
	he only assumed that kernel pairs and their coequalizers exit. We obtain the following result (Theorem \ref{T:main} below).

	\begin{theorem}\label{th1.3}
		A category is equivalent to a variety iff it is exact and has a varietal generator.
	\end{theorem}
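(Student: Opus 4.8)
The plan is to identify varieties with the finitary monadic categories over $\Set$ and to realize this monadicity through the hom-functor of the generator. Thus, writing $U=\ck(G,-)\colon\ck\to\Set$, I would show that $U$ is monadic and that the induced monad $\T=UF$ is finitary, so that $\ck\simeq\Set^{\T}$ is a variety, and conversely that every variety carries such structure. Throughout, copowers of $G$ supply the left adjoint $F=(-)\cdot G$ to $U$, with $UF(X)=\ck(G,X\cdot G)$.

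For the ``only if'' direction I would begin with a variety, i.e.\ $\ck=\Set^{\T}$ for a finitary monad $\T$. That such categories are exact in the sense of Definition \ref{D:Barr} is classical: they have kernel pairs and their coequalizers, regular epimorphisms are stable under pullback, and every congruence is effective. It then remains to exhibit a varietal generator, for which the free algebra $G=\T 1$ on one generator is the natural choice. It is a regular, hence strong, generator because every algebra is a regular quotient of a coproduct of copies of $G$; it has copowers since $\ck$ is cocomplete; it is regular projective because free algebras are projective with respect to surjective homomorphisms, which here are exactly the regular epimorphisms; and it is abstractly finite precisely because $\T$ is finitary. The last point uses the identification $\ck(G,X\cdot G)\cong\T X$, under which finitarity of $\T$ becomes the factorization of every morphism $G\to X\cdot G$ through a finite subcopower required by Definition \ref{D:vg}.

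For the ``if'' direction, suppose $\ck$ is exact with varietal generator $G$. Since $G$ is a strong generator, $U$ reflects isomorphisms. The substance of monadicity is then the existence and $U$-preservation of the coequalizers demanded by the monadicity theorem. Rather than Beck's coequalizers of arbitrary $U$-split pairs, I would invoke Linton's $\Set$-based form of the criterion, which calls only for coequalizers of kernel pairs --- exactly the class guaranteed by exactness. Regular projectivity of $G$ makes $U$ preserve regular epimorphisms, and together with effectiveness of congruences this lets one match the coequalizer computed in $\ck$ with the one computed in $\Set$, yielding $U$-preservation; combined with reflection of isomorphisms this gives $\ck\simeq\Set^{\T}$. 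That $\T$ is moreover finitary follows at once from abstract finiteness of $G$: as above $\T X=\ck(G,X\cdot G)$ is the filtered colimit of the $\ck(G,X'\cdot G)=\T X'$ over the finite subsets $X'\subseteq X$, which is precisely the statement that $\T$ preserves filtered colimits. Hence $\ck\simeq\Set^{\T}$ is a variety.

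The main obstacle is the middle step of the ``if'' direction: bridging the gap between the coequalizers needed for monadicity and the meagre supply --- only coequalizers of kernel pairs --- furnished by exactness. The delicate point is to show that the pairs arising in the monadicity criterion are, in this setting, genuine kernel pairs in $\ck$, and that the coequalizers produced by exactness are sent by $U$ to the corresponding coequalizers in $\Set$. This is exactly where regular projectivity of $G$ (preservation of regular epimorphisms) must be combined with effectiveness of congruences, and where Linton's reformulation does the decisive work that allows one to dispense with general reflexive coequalizers in favour of the weaker exactness hypothesis.
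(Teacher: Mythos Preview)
Your approach is essentially the paper's: necessity via the free algebra on one generator, sufficiency via Linton's monadicity theorem applied to $U=\ck(G,-)$, plus finitarity of $\T=UF$ from abstract finiteness. Two places deserve more care, however.

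First, the version of Linton's theorem the paper uses (Theorem~\ref{T:L}) is not phrased as ``reflects isomorphisms and preserves certain coequalizers''; it asks that $U$ \emph{preserve and reflect congruences} and \emph{preserve and reflect regular epimorphisms}. You cover preservation of regular epimorphisms (regular projectivity), but you do not mention reflection of regular epimorphisms, nor preservation/reflection of congruences. The latter is where the real work lies: reflecting congruences amounts to showing that a congruence with respect to $G$ is an honest congruence, and this is exactly the content of Proposition~\ref{L:G}, which in turn needs $G$ to be a \emph{regular} generator (obtained from ``strong'' via Lemma~\ref{L:reg}). Your last paragraph gestures at a difficulty but misidentifies it; the pairs in Linton's criterion are congruences, and effectiveness converts them to kernel pairs, but one must first know they \emph{are} congruences in $\ck$, not merely in $\Set$.

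Second, your finitarity argument is incomplete. Abstract finiteness gives that every element of $\T X$ lifts to some $\T X'$ with $X'\subseteq X$ finite, so $\T$ preserves the particular filtered colimit $X=\colim_{X'\text{ finite}}X'$. That alone does not show $\T$ preserves \emph{all} filtered colimits. The paper closes this gap by first upgrading abstract finiteness to ``$G$ finitely generated'' (Lemma~\ref{L:fg}) and then invoking \cite[Thm.~3.4]{AMSW} to conclude that $\T$ is finitary.
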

	
	Our second topic is a characterization of varieties of ordered algebras. Here one works with algebras acting on posets so that the operations are monotone. A variety is a  full subcategory  presented by inequations between terms. Varieties are enriched categories over the cartesian closed category $\Pos$ of posets. In \cite{ARO} a characterization of varieties of ordered algebra has been presented, and our purpose is to sharpen and correct that result slightly . Whereas reflexive coequalizers play an important role in classical varieties (because they are preserved by the forgetful functor to $\Set$), for ordered varieties reflexive coinserters  (Def. \ref{D:coins}) play the analogous role.
	A \emph{subregular epimorphism} is a morphism which is a coinserter of a reflexive pair. An object whose hom-functor to $\Pos$ preserves subregular epimorphisms is a \emph{subregular projective}.
	
	\begin{defi}\label{de1.4}
		A \emph{subvarietal generator} in an order-enriched category is a strong generator with  copowers which is an abstractly finite subregular projective.
	\end{defi}
	
	Whereas in ordinary categories a congruence is a reflexive, symmetric and transitive relation, in order-enriched categories, we 
	lose the symmetry, but gain a stronger property then reflexivity -- we call it hyper-reflexivity (Def. \ref{D:hyper}). We
	introduce \emph{subcongruences}: relations that are hyper-reflexive and transitive.
	Example: given a morphism $f\colon X \to Y$, its subkernel pair $r_0$, $r_1 \colon R\to X$ (universal with respect to $f\cdot r_0 \leq f\cdot r_1$) is a subcongruence. We prove that every variety of ordered algebras has \emph{effective subcongruences}: each subcongruence is the subkernel pair of a morphism.
	
	\begin{defi}\label{de1.5}
		An order-enriched category is \emph{subexact} if it has subkernel pairs, reflexive coinserters, and effective subcongruences.
		
	\end{defi}
	
	The following result (Corollary \ref{C:main} below) slightly improves and corrects the characterization presented in \cite{ARO}:
	
	\begin{theorem}\label{th1.6}
		An order-enriched category is equivalent to a variety of ordered algebras iff it is subexact and has a subvarietal generator.
	\end{theorem}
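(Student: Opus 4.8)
The plan is to mirror the strategy behind the classical Theorem~\ref{th1.3}, systematically replacing ordinary monadicity over $\Set$ by $\Pos$-enriched monadicity over $\Pos$, and replacing kernel pairs, regular epimorphisms, and effective congruences by their ordered counterparts: subkernel pairs, subregular epimorphisms, and effective subcongruences. Both implications of the equivalence must be treated.

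For necessity, let $\ck$ be a variety of ordered algebras with forgetful functor $U\colon\ck\to\Pos$. First I would check subexactness: subkernel pairs are computed on underlying posets and inherit the algebra structure; reflexive coinserters exist because $U$ both preserves and creates them (using the free/forgetful adjunction, exactly as reflexive coequalizers behave in the classical case); and effective subcongruences hold by the result announced in the introduction, namely that each subcongruence is the subkernel pair of a morphism. Then I would exhibit the free ordered algebra $G$ on one generator as a subvarietal generator: it is a strong generator (as usual for the one-generated free object), it has copowers given by coproducts of copies of $G$, it is abstractly finite because the associated monad is finitary, and it is subregularly projective because $\Pos(G,-)\cong U$ preserves reflexive coinserters and hence carries subregular epimorphisms to subregular epimorphisms.

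For sufficiency, assume $\ck$ is subexact with a subvarietal generator $G$, and put $U=\Pos(G,-)\colon\ck\to\Pos$. Since $G$ has copowers, $U$ has a $\Pos$-enriched left adjoint $F$ sending a poset $P$ to the copower $P\cdot G$; this produces a $\Pos$-enriched monad $\T$ on $\Pos$ and a comparison functor from $\ck$ to the category of $\T$-algebras. The heart of the argument is to prove that $U$ is $\Pos$-enriched monadic. Here $U$ reflects isomorphisms because $G$ is a strong generator, and the enriched Beck/Linton criterion reduces matters to showing that $\ck$ has, and $U$ both preserves and reflects, the coinserters of the relevant reflexive pairs---concretely, the subkernel pairs. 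Existence is supplied by subexactness (reflexive coinserters), preservation by subregular projectivity of $G$, and the identification of $U$-subkernel pairs with genuine subkernel pairs in $\ck$ is precisely what effective subcongruences deliver.

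Finally, abstract finiteness of $G$ forces $U$ to preserve filtered colimits, so $\T$ is a finitary $\Pos$-enriched monad; by the correspondence between finitary $\Pos$-monads on $\Pos$ and varieties of ordered algebras---the enriched analogue of the classical Linton correspondence---the equivalence $\ck\simeq\Pos^{\T}$ exhibits $\ck$ as a variety. I expect the main obstacle to lie in the monadicity step, and specifically in the ordered form of Linton's reduction: because the loss of symmetry (hyper-reflexivity replacing reflexivity, Def.~\ref{D:hyper}) prevents the direct reuse of equivalence-relation arguments, one must verify that effective subcongruences together with subregular projectivity genuinely suffice to create the coinserters demanded by the enriched monadicity theorem, carrying out with subcongruences and coinserters the bookkeeping that the classical proof performs with congruences and coequalizers.
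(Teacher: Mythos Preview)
Your proposal is a reasonable strategy, but it diverges substantially from the paper's actual proof. You mirror the paper's treatment of the \emph{classical} case (Theorem~\ref{th1.3}) by invoking an enriched Linton/Beck monadicity theorem over $\Pos$ together with the correspondence between finitary $\Pos$-monads and ordered varieties. The paper does \emph{not} do this for the ordered case: instead it builds an explicit signature $\Sigma$ with $\Sigma_n=\ck(G,n\cdot G)$, defines a concrete full embedding $E\colon\ck\to\Sigma\text{-}\Pos$, and then applies the Birkhoff Variety Theorem for ordered algebras, verifying closure under products, subalgebras, and homomorphic images. Along the way the paper first establishes (subregular epi, embedding) factorizations, that $G$ is a subregular generator, and completeness/cocompleteness via the density argument of Section~\ref{sec3}.

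What each approach buys: yours is conceptually uniform with the classical proof and, if the enriched monadicity theorem and the finitary-$\Pos$-monad/variety correspondence are taken as black boxes, quite short; however, neither of those tools is developed in the paper, so your proof would import external machinery. The paper's route is more self-contained and sidesteps the delicate enriched Beck condition you flag as the main obstacle. Two small points on your sketch: having copowers alone does not give an enriched left adjoint $F$ on all of $\Pos$---you need tensors $P\otimes G$, which the paper obtains from reflexive coinserters (Lemma~\ref{L:ten}); and in the necessity direction, saying $G$ is abstractly finite ``because the associated monad is finitary'' inverts the logical order---one argues that $G$ is finitely generated (free on one generator), hence abstractly finite.
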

	
	\vskip 2mm
	\noindent\textbf{Related Work}\
	Vitale characterized monadic categories over $\Set$ as precisely the finitely complete, exact categories with a regularly projective regular  generator (\cite[Prop. 3.2]{V}). He does not assume finite limits, but uses them all over his proof.
	Our proof, based on Linton's theorem \ref{T:L}, shows that finite limits (beyond kernel pairs) need not be assumed, and a regularly projective strong   generator is sufficient to characterize varieties.

	Similarly, Rosick{\'{y}} and the author characterized classical varieties using the existence of reflexive coequalizers (rather  than just coequalizers of kernel pairs) -- otherwise Corollary 3.6 in \cite{ARV} is the same as Theorem 1.3 above. Thus our theorem is just a tiny improvement, however, precisely that needed for getting the characterization using Barr's exactness. Moreover, the proof we present is simpler than that in \cite{ARV}.
	
	 A closely related result is a recent characterization of varieties of ordered algebras due to Rosick{\'{y}} and the author: in \cite{ARO} subregular epimorphisms and subregular projectives have been introduced, and a characterization theorem was proved that differs from Theorem 1.6 essentially by not working with exactness and by assuming the generator to be subregular. Since small gaps appear in op .cit., we present a corrected version.
	
	Our concept of subcongruence and subexact categories is new. It is related to congruences and exact categories due to Kurz and Velebil \cite{KV} for poset-enriched categories and Bourke and Garner \cite{BG} in general enriched categories, which appear to be quite more technical, however.
	
	\section{Varieties}\label{sec2}
	
	Classical (finitary, one-sorted) varieties were characterized by Lawvere. We explain why a small correction is needed and present some simplifications.
	
	For an object $G$ in $\ck$ with copowers (denoted by $M\cdot G$ for all sets $M$) we obtain the \emph{canonical morphisms}
	$$
	[f]\colon \ck(G,X)\cdot G =\coprod_{f\colon G\to X} G \longrightarrow X \qquad (X\in \obj \ck)\,.
	$$
	Recall that $G$ is a \emph{generator} if all canonical morphisms are epic, a \emph{strong generator} if they extremally epic (do not factorize through a proper subobject of $X$), and a \emph{regular generator} if they are regular epimorphisms. Recall further that $G$ is a \emph{regular  projective} if for each regular
	epimorphism $e\colon X\to Y$  all morphisms from $G$ to $Y$ factorize through $e$. Shortly: $\ck (G,-)$ preserves regular epimorphisms.
	
	Lawvere introduced the following concept; he attributed  it to Freyd.

\begin{defi}[\cite{L}]\label{D:abs} 
	An object $G$ is \emph{abstractly finite} if every morphism from $G$ to a copower $M\cdot G$ factorizes through a finite subcopower.
\end{defi}

That is, for every set $M$ and every morphism $f\colon G\to M\cdot G$  there exists a finite subset $u\colon M_0 \hookrightarrow M$ such that  $f$ factorizes through $u\cdot G\colon M_0 \cdot G \to M\cdot G$ (the morphism  induced by $u$).

\begin{example}
	In $\Set$ this means  that $G$ is finite, in the category of vector spaces that $G$ is finite-dimensional.
	
	Every finite poset is abstractly finite in $\Pos$. But also the linearly ordered set $\R$ is. In fact, every poset with finitely many connected components is abstractly finite.
\end{example}
 \begin{lemma}\label{L:reg}
	Let $\ck$ have kernel pairs and their coequalizers. Every regularly projective strong generator $G$  with copowers is a regular generator.
\end{lemma}

 \begin{proof}
 For every object $X$ let us prove that the morphism
 $$
 [h] \colon \coprod_{h\colon G\to X} G \to X
 $$
 is the coequalizer of its kernel pair $r_0$, $r_1$. Let $e$ be the coequalizer of that pair, and $m$ the unique factorization:
 $$
\xymatrix@C=4pc@R=4pc{
R \ar@<0.5ex>[r]^{r_1}
      \ar@<-0.5ex>[r]_{r_0} &\coprod\limits_{h\colon G\to X}G \ar[d]_e  \ar[r]^{[h]}& X\\
G\ar@<0.5ex>[ur]^{u'_0}
\ar@<-0.5ex>[ur]_{u'_1}
\ar@<0.5ex>[r]^{u_1}
\ar@<-0.5ex>[r]_{u_0}
\ar[u]^v&Y \ar[ur]_m&
}
$$
 Since $[h]$ is a strong epimorphism, so is $m$. Thus it sufficient to prove that $m$ is monic: then it is invertible. Since $G$ is a generator, we can restrict ourselves to parallel pairs with domain $G$.
 
 Given $u_0$, $u_1\colon G\to Y$ with $m\cdot u_0 = m\cdot u_1$, we verify $u_0 = u_1$. Since $G$ is a regular projective, we have $u'_i$ with $u_i = e\cdot u'_i$ ($i=0,1$). From $m\cdot u_0 = m\cdot u_1$ we get $[h]\cdot u'_0 = [h] \cdot u'_1$. Since $r_0$, $r_1$ is the kernel pair of $[h]$, there is $v\colon G\to R$ with $r_i\cdot v=u'_i$ ($i=0,1$). Thus
 $$
 u_0 = e\cdot r_0 \cdot v = e\cdot r_1 \cdot v= u_1\,. 
$$
 \end{proof}

Recall that an object is \emph{finitely generated} if irs hom-hom-functor preserves directed colimits of
monomorphisms.
\begin{lemma}\label{L:fg} 
Let $\ck$ be a cocomplete category with kernel pairs. 

{\rm (1)} Every finitely generated object $G$ is abstractly finite.

{\rm (2)} If $G$ is a varietal generator  (\ref{D:vg}) , then 

\begin{center}
abstractly finite $\Leftrightarrow$ finitely generated
\end{center}
\end{lemma}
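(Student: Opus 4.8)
The plan is to prove the two implications of Lemma~\ref{L:fg} separately, drawing on the definitions of abstract finiteness (\ref{D:abs}) and varietal generator (\ref{D:vg}).

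For part (1), suppose $G$ is finitely generated and let $f\colon G\to M\cdot G$ be any morphism into a copower. I would express the copower $M\cdot G=\colim_{M_0} M_0\cdot G$ as the directed colimit of its finite subcopowers $M_0\cdot G$ (ranging over finite subsets $M_0\subseteq M$), noting that the connecting maps $u\cdot G$ induced by inclusions $u\colon M_0\hookrightarrow M_1$ are monomorphisms, since a coproduct injection of copowers splits. This directed colimit of monomorphisms is preserved by $\ck(G,-)$ by the definition of finitely generated, so the element $f\in\ck(G,M\cdot G)$ is in the image of some $\ck(G,M_0\cdot G)$; that is, $f$ factorizes through a finite subcopower. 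This is exactly abstract finiteness, and it uses cocompleteness only to form the copowers and the colimit.

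For part (2), the direction ``finitely generated $\Rightarrow$ abstractly finite'' is already covered by~(1), so the work is the converse: assuming $G$ is a varietal generator that is abstractly finite, I must show $G$ is finitely generated, i.e.\ that $\ck(G,-)$ preserves directed colimits of monomorphisms. The plan is to take a directed diagram $D$ of monomorphisms with colimit $C=\colim D_i$ and colimit injections $c_i\colon D_i\to C$, and to show that every $g\colon G\to C$ factors through some $c_i$ essentially uniquely. The essential tool is that $G$ is a regular projective strong generator with copowers, so that $C$ can be presented as a regular quotient of a copower of $G$ (using Lemma~\ref{L:reg}, which makes $G$ a regular generator), and abstract finiteness controls how morphisms out of $G$ interact with copowers. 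I would use abstract finiteness to reduce the factorization question about $g\colon G\to C$ to a question about finitely many generators, then push that finite data through the directed colimit to locate the stage $D_i$.

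The main obstacle I expect is the converse in part (2): abstract finiteness is an a priori much weaker condition than being finitely generated (it only constrains maps into copowers $M\cdot G$, not into arbitrary directed colimits of monomorphisms), so the whole strength of ``varietal generator'' --- the regular projectivity and the strong/regular generation --- must be deployed to bridge this gap. Concretely, the hard step is to transfer the finite-subcopower factorization, which abstract finiteness gives only for maps landing in copowers of $G$ itself, to a factorization through a colimit stage of an arbitrary monomorphic directed diagram; this requires presenting the objects of the diagram and the colimit as canonical regular quotients of copowers of $G$ and checking that the factorizations and the colimit cocone are compatible, which is where the regular projectivity of $G$ and the exactness-type hypotheses do the real work.
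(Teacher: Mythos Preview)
Your proof of part~(1) is correct and matches the paper's exactly: express $M\cdot G$ as a directed colimit of finite subcopowers along split monomorphisms and apply the definition of finitely generated.

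For part~(2), your overall strategy is the right one and aligns with the paper, but your sketch leaves the crucial step underspecified in a way that risks circularity. If you present $C$ as the canonical regular quotient $\coprod_{h\colon G\to C}G\twoheadrightarrow C$ and lift $g$ through it, abstract finiteness lands you in a finite subcopower indexed by morphisms $h_1,\dots,h_n\colon G\to C$; but to conclude that $g$ factors through some stage $c_i$, you would need each $h_k$ already to factor through a stage --- which is exactly what you are trying to prove. The paper avoids this trap by a clean two-step argument. First it proves an auxiliary lemma: every morphism $G\to\coprod_{i\in I}A_i$ into an \emph{arbitrary} coproduct factors through a finite subcoproduct. (Cover each $A_i$ by its canonical regular epimorphism $\coprod_{h\colon G\to A_i}G\to A_i$, take the coproduct of these covers, lift using regular projectivity, and then apply abstract finiteness to the resulting huge copower of $G$.) Second, it observes that the colimit map $[a_i]\colon\coprod_{i\in I}A_i\to A$ is a regular epimorphism (standard colimit construction via coproducts and coequalizers), lifts $f\colon G\to A$ through it, applies the auxiliary lemma to land in $\coprod_{j\in J}A_j$ for finite $J$, and then uses directedness to find an upper bound of $J$.

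Your remark about ``presenting the objects of the diagram'' as quotients of copowers of $G$ is exactly the move needed for the auxiliary lemma, so your instincts are sound; you just need to route the argument through $\coprod_i D_i\to C$ rather than through the canonical cover of $C$ itself.
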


\begin{proof}
(1) Just use that for $M$ infinite the copower $M\cdot G$ is the directed colimit of all $M_0\cdot G$ for $\phi \ne M_0\subseteq M$ finite.
The connecting morphisms are split monomorphisms.

(2) Let $G$ be an abstractly finite regular projective
strong generator. We first prove an auxilliary fact:

(a) Every morphism $f\colon G\to \coprod\limits_{i\in I} A_i$ factorizes through a finite subcopower of $\coprod\limits_{i\in I} A_i$. Indeed, each of the canonical morphisms
$$ c_i = [h] \colon \coprod_{h\colon G\to A_i} G\to A_i
$$
is a regular epimorphism (by the preceding lemma).  Thus the morphism
$$
c=\coprod_{i\in I} c_i \colon \coprod_{i\in I} \coprod_{h\colon G\to A_i} G\to \coprod_{i\in I} A_i
$$
is also a regular epimorphism. As $G$ is a regular projective, there exists a factorization of $f$ as $f= c\cdot g$ for some $g\colon G\to  \coprod\limits_{i\in I} \coprod\limits_{h\colon G\to A_i} G$. Since $g$ factorizes through a finite subcoproduct, we have a  finite subset $J \subseteq I$ such that $g$ factorizes through the subcoproduct $\coprod\limits_{i\in J} \coprod\limits_{h\colon G\to A_i} G$. Consequently $f= c\cdot g$ factorizes through $\coprod\limits_{i\in J} A_i$, as claimed.

(b) We prove that $G$ is finitely generated.
 Given a colimit $a_i \colon A_i \to  A$ ($i\in I$) of a directed diagram $D$ of monomorphisms, our task is to prove that $\ck (G, -)$ preserves it. In other words: every morphism $f\colon G\to A$ factorizes through some $a_i$. The standard construction of colimits via coproducts and coequalizers
proves that $[a_i] \colon \coprod\limits_{i\in I} A_i \to A$ is a regular epimorphism. Thus $f$ factorizes through it. Since $G$ is abstractly finite, $f$ factorizes through $[a_j]_{j\in J} \colon \coprod\limits_{j\in J} A_j \to A$ for some finite subset $J\subseteq I$. The diagram $D$ is directed, so we can find an upper bound $i\in I$ of $J$. Then $[a_j]_{j\in J}$  factorizes through $a_i$, thus so does $f$.
\end{proof}

 Recall that regular epimorphisms are \emph{stable under pullback} if in every pullback
 $$
\xymatrix@C=2pc@R=2pc{
& P\ar[dl]_{f'} \ar[dr]^{e'} &\\
A\ar[dr]_{e} && B \ar[dl]^{f}\\
& Q &
}
$$
with $e$ a regular epimorphism, so is $e'$.

\begin{lemma}\label{L:stable}
Let $G$ be a regularly projective strong generator with copowers. If $\ck$ has kernel pairs and their coequalizers, then it has

{\rm(1)} Regular factorizations;

{\rm(2)} Stability of regular epimorphisms under pullback.
\end{lemma}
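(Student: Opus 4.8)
The plan is to prove both parts by the elementwise technique already used in Lemma \ref{L:reg}, exploiting three features of $G$: it is a generator (so equality of morphisms may be tested on domain $G$), it is regularly projective (so morphisms out of $G$ lift along regular epimorphisms), and it is a strong generator (so its canonical morphisms are extremally epic). Only the existence of kernel pairs and of their coequalizers enters the constructions.

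For (1), given $f\colon A\to B$ I would form its kernel pair $k_0,k_1\colon K\to A$ and let $e\colon A\to C$ be the coequalizer of this pair, which exists by hypothesis and is therefore a regular epimorphism. Since $f$ coequalizes $k_0,k_1$, it factors as $f=m\cdot e$ for a unique $m\colon C\to B$, and the claim is that $m$ is monic, so that $f=m\cdot e$ is the desired regular factorization. To verify this I restrict, as $G$ is a generator, to a parallel pair $u_0,u_1\colon G\to C$ with $m\cdot u_0=m\cdot u_1$. Regular projectivity of $G$ lifts each $u_i$ through $e$ to $u_i'\colon G\to A$ with $u_i=e\cdot u_i'$; then $f\cdot u_0'=f\cdot u_1'$, so the kernel-pair property yields $v\colon G\to K$ with $k_i\cdot v=u_i'$, whence $u_0=e\cdot k_0\cdot v=e\cdot k_1\cdot v=u_1$. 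This is verbatim the argument of Lemma \ref{L:reg}.

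For (2) I first observe that every morphism $g\colon G\to B$ factors through $e'$. Indeed $f\cdot g\colon G\to Q$ lifts, by regular projectivity of $G$ along the regular epimorphism $e$, to $a\colon G\to A$ with $e\cdot a=f\cdot g$; the universal property of the pullback then produces $p\colon G\to P$ with $e'\cdot p=g$. Consequently, if $e'=m\cdot\bar e$ is the regular factorization supplied by part (1), with $m$ monic and $\bar e$ regular epic, then every $g\colon G\to B$ factors through the monomorphism $m$. Because $G$ is a \emph{strong} generator, the canonical morphism $[g]\colon \ck(G,B)\cdot G\to B$ is extremally epic; since each of its coproduct components factors through $m$, so does $[g]$, and an extremal epimorphism factoring through a monomorphism forces that monomorphism to be invertible. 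Hence $m$ is an isomorphism and $e'=m\cdot\bar e$ is a regular epimorphism.

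The routine half is (1), which merely repeats Lemma \ref{L:reg}. The real content is (2), and its crux is the passage from the purely pointwise surjectivity of $e'$ on maps out of $G$ to genuine regularity: the strong-generator hypothesis upgrades pointwise surjectivity to being a strong (extremal) epimorphism, while the existence of the regular factorization from part (1) is exactly what lets a strong epimorphism be recognized as regular. I expect the main obstacle to be keeping these two ingredients — strong generation and regular factorization — cleanly separated, since neither alone delivers the conclusion.
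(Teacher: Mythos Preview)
Your proof is correct and follows essentially the same approach as the paper's: for (1) you reproduce the argument of Lemma \ref{L:reg}, and for (2) you lift $g\colon G\to B$ through $e'$ via regular projectivity and the pullback, then use the regular factorization from (1) together with the strong-generator property to conclude that $e'$ is regular epic. The only difference is that you spell out in more detail why the strong-generator hypothesis forces the monic factor to be invertible, which the paper leaves implicit.
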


\begin{proof}
(1) Every morphisms $f\colon A\to B$ with kernel pair $r_0, r_1$  factorizes as $f= m\cdot c$ where $c$ is the coequalizer of  $r_0, r_1$. The proof that $m$ is monic is analogous to the proof of Lemma \ref{L:reg}.

(2) In the above pullback we observe that every morphism $g\colon G\to B$ factorizes through  $e'$:
$$
\xymatrix@C=3pc@R=3pc{
& G \ar[ddl]_{h} 
\ar@{-->}[d] \ar[ddr]^{g}&\\
& P\ar[dl]^{f'} \ar[dr]_{e'} &\\
A\ar[dr]_{e} && B \ar[dl]^{f}\\
& Q &
}
$$
 Indeed, since $G$ is a regular projective, for the composite $f\cdot g\colon G\to C$ there is a factorization (say, $h$) through $e$. The universal property of the pullback yields the desired factorization of $g$.

 Let $e'=m\cdot c$  be the regular factorization of $e'$. Then every morphism $g\colon G\to B$ factorizes also through $m$, and since $G$ is a strong generator, this proves that $m$ is invertible. Thus $e'$ is a regular epimorphism.
 \end{proof}

 \begin{remark}\label{R:eq}
 We recall that in a (not necessarily finitely complete) category a \emph{relation} on an object $A$ is represented by a collectively monic pair of morphisms
 $$
 r_0, r_1\colon R\to A\,.
 $$
 
  We say that a parallel pair $s_0$, $s_1\colon S\to A$ \emph{factorizes} through the relation if there is $f\colon S\to R$ with $s_i = r_i\cdot f$ ($i=0,1$).
\end{remark}

\begin{defi}
A \emph{congruence} is a relation $ r_0, r_1\colon R\to A\,$ which is 

(i) \emph{Reflexive:} $r_0, r_1$ are split epimorphisms with a  joint splitting. Equivalently: for every morphism $s\colon S\to A$ the pair $s, s$ factorizes through $r_0, r_1$.

(ii) \emph{Symmetric:} $r_1, r_0$ factorizes through $r_0, r_1$.  Equivalently: if $s$, $s' \colon S\to A$ factorize through $r_0, r_1$, then so do $s', s$.

(iii) \emph{Transitive:} given morphisms $s, s', s'' \colon S\to A$ such that both $s, s'$ and $s', s''$ factorize through $r_0, r_1$, then $s, s''$ also factorizes through $r_0, r_1$.

\end{defi}

\begin{example}
Let $f\colon A\to B$ be a morphism. Its \emph{kernel pair} (which is a universal pair $r_0$, $r_1\colon R\to A$ with $f\cdot r_0 = f\cdot r_1$) is a congruence. A  category has \emph{effective congruences} if every congruence is a kernel pair of some morphism.
\end{example}

\begin{remark}
(1) In the presence of pullbacks transitivity simplifies as follows: given a pullback of $r_1$ and $r_0$:
$$r_1 \cdot r'_0 = r_0 \cdot r'_1$$
then the pair $r_0 \cdot r'_0 , r_1 \cdot r'_1$ factorizes through $r_0 , r_1$.

(2) A parallel pair  $r_0, r_1\colon R\to A\,$ is a congruence iff for every object $S$
the hom-functor $\ck (S,-))$ takes it to a set-theoretical equivalence relation on the set
$\ck (S,X)$. That is, the relation $ \{(r_0 \cdot f, r_1 \cdot f); f \colon S \to R\}$ is reflexive, symmetric, and transitive.

(3) Lawvere worked, for a given object $G$, with a relative concept of reflexivity, symmetry and transitivity: instead of taking an arbitrary object $S$ as above, he restricted it to $G=S$. He then called the relation a \emph{congruence with respect to $G$} if the  set-theoretical relation on $\ck (G,A)$  is an equivalence relation. However, this makes no difference in case $G$ is a regular generator:

\end{remark}

\begin{prop}\label{L:G}
If $G$ is a regular generator with copowers,  then every congruence with respect to $G$ is a congruence.
\end{prop}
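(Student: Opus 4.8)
The plan is to establish the three closure properties for an arbitrary probing object $S$ one by one, in each case transporting the corresponding property from $G$ to $S$ along the canonical morphism
$$
e=[h]\colon \ck(G,S)\cdot G=\coprod_{g\colon G\to S} G\longrightarrow S,
$$
which is a regular epimorphism because $G$ is a regular generator. I write $\iota_g$ for the coproduct injection indexed by $g\colon G\to S$, so that $e\cdot\iota_g=g$. The three cases run in complete parallel, so I would treat them by a single pattern and record only the small bookkeeping differences.

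First I would produce, for every $g\colon G\to S$, a factorization at the level of $G$. For reflexivity, given $s\colon S\to A$, reflexivity of the relation on $\ck(G,A)$ applied to $s\cdot g\colon G\to A$ yields $f_g\colon G\to R$ with $r_0\cdot f_g=r_1\cdot f_g=s\cdot g$. For symmetry and transitivity one first pushes the given $S$-level factorization(s) forward along $g$ -- e.g.\ from $f\colon S\to R$ with $r_0\cdot f=s$, $r_1\cdot f=s'$ one obtains that $(s\cdot g,\,s'\cdot g)$ factorizes through $(r_0,r_1)$ via $f\cdot g$ -- and then invokes symmetry, resp.\ transitivity, of the relation on $\ck(G,A)$ to get $f_g\colon G\to R$ realizing the desired $G$-level factorization. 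In every case the two legs of $f_g$ are $t_0\cdot g$ and $t_1\cdot g$, where $\{t_0,t_1\}\subseteq\{s,s',s''\}$ is the pair we ultimately wish to factorize.

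Next I would assemble the family $(f_g)$ into a single morphism $F\colon \ck(G,S)\cdot G\to R$ with $F\cdot\iota_g=f_g$, using the universal property of the copower. Evaluating on injections and using $e\cdot\iota_g=g$ shows
$$
r_0\cdot F=t_0\cdot e,\qquad r_1\cdot F=t_1\cdot e.
$$
The heart of the argument is to factorize $F$ through $e$. Writing $e=\coeq(p_0,p_1)$ for some witnessing pair, the displayed equalities together with $e\cdot p_0=e\cdot p_1$ give $r_i\cdot F\cdot p_0=r_i\cdot F\cdot p_1$ for $i=0,1$; since $(r_0,r_1)$ is a relation, hence a collectively monic pair, this forces $F\cdot p_0=F\cdot p_1$. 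The universal property of the coequalizer then supplies a unique $f\colon S\to R$ with $F=f\cdot e$, and cancelling the epic $e$ in $r_i\cdot f\cdot e=t_i\cdot e$ yields $r_0\cdot f=t_0$ and $r_1\cdot f=t_1$. Thus $(t_0,t_1)$ factorizes through $(r_0,r_1)$, which is exactly the property required for $S$.

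The only real obstacle is this last factorization of $F$ through $e$, and it is precisely here -- and nowhere else -- that the hypothesis \emph{regular} generator is used (to know that $e$ is a regular epimorphism). I would stress that this step needs neither kernel pairs nor any further completeness of $\ck$: collective monicity of $(r_0,r_1)$ lets us verify the coequalizing condition $F\cdot p_0=F\cdot p_1$ for \emph{whatever} pair $(p_0,p_1)$ exhibits $e$ as a coequalizer, so no regular projectivity of $G$ and no limits beyond the copowers of $G$ are required.
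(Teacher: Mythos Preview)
Your proof is correct and follows essentially the same strategy as the paper's: assemble the $G$-level factorizations into a morphism out of the relevant copower, use collective monicity of $(r_0,r_1)$ to verify that this morphism merges the pair exhibiting the canonical map as a coequalizer, and then factor through that coequalizer. The only cosmetic difference is that the paper treats reflexivity and symmetry via the canonical maps to $A$ and to $R$ (producing directly the joint splitting $d\colon A\to R$ and the swap $d\colon R\to R$), whereas you run all three cases uniformly through the canonical map to an arbitrary $S$; your packaging is slightly cleaner but the mathematical content is the same.
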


\begin{proof}
Let $r_0$, $r_1 \colon R\to A$ be a congruence with respect to $G$. We prove that it is a congruence.

(1) Reflexivity. Let $u_0$, $u_1\colon U\to A$ be a pair with the coequalizer  $[h]\colon \coprod\limits_{h\colon G\to A} G\to A$. Since $r_0$, $r_1$  is reflexive with respect to $G$, each pair $h, h$ factorizes through $r_0,r_1$:  there exists $h'\colon
G\to R$ with $h=r_0 \cdot h' = r_1\cdot h'$. The morphism $[h']\colon 
 \coprod\limits_{h\colon G\to A} G\to R$ merges $u_0$ and $u_1$:
$$
\xymatrix@C=3pc@R=3pc{
U \ar@<0.5ex>[r]^<<<<<<{u_1}
      \ar@<-0.5ex>[r]_<<<<<<{u_0} &\coprod\limits_{h\colon G\to A}G
      \ar[r]^{[h]}
      \ar[d]_{[h']}
& A\\
& R \ar@<0.5ex>[ur]^{r_1}
\ar@<-0.5ex>[ur]_{r_0}
&
}
$$
Indeed, we use that the pair $r_0$, $r_1$ is collectively monic. For $r_0$ we have
$$
r_0 \cdot [h'] \cdot u_i= [r_0 \cdot h'] \cdot u_i = [h]\cdot u_i
$$
which is independent of $i=0,1$. The same holds for $r_1$. Consequently, $[h'] \cdot u_0 = [h'] \cdot u_1$. Therefore $[h']$ factorizes through $[h]$: we have $d\colon A\to R$ with $[h'] = d\cdot [h]$. This is a joint splitting of $r_0$ and $r_1$. Indeed, $r_0 \cdot d= \id$ because $[h]$ is epic and
$$
r_0 \cdot d \cdot [h] = r_0 \cdot [d\cdot h] = r_0 \cdot [h'] = [r_0 \cdot h'] = [h]\,.
$$
Analogously for $r_1$.

\vskip 1mm
(2) Symmetry. Let $u_0$, $u_1\colon U \to \coprod\limits_{h\colon G\to R} G$ be a pair with coequalizer $[h] \colon \coprod\limits_{h\colon G\to R} G \to R$. Then symmetry with respect to $G$ implies that given $h\colon G\to R$ (which is a factorization of the pair $r_0\cdot h$, $r_1\cdot h$ through $r_0, r_1$), there exists $h' \colon G\to R$ factorizing $r_1\cdot h$, $r_0\cdot h$ through $r_0$, $r_1$. Thus  we have the following  commutative squares
$$
\xymatrix@C=3pc@R=3pc{
G\ar[r]^{h'} \ar[d]_{h} & R\ar[d]^{r_0}\\
R\ar[r]_{r_1} & A
}
\qquad
\xymatrix@C=3pc@R=3pc{
G\ar[r]^{h'} \ar[d]_{h} & R\ar[d]^{r_1}\\
R\ar[r]_{r_0} & A
}
$$
The morphism $[h'] \colon \coprod\limits_{h\colon G\to R} G \to R$ merges $u_0$ and $u_1$. This is analogous to (1): for $r_0$ we have 
$$
r_0 \cdot [h'] \cdot u_i= [r_0 \cdot h'] \cdot u_i =
[r_1\cdot h] \cdot u_i= r_1\cdot [h] \cdot u_i
$$
which is independent of $i=0,1$. The same holds for $r_1$.

The morphism $d\colon R\to R$ defined by $[h'] = d\cdot [h]$ is the desired factorization of $r_1$, $r_0$ through $r_0$, $r_1$.
Indeed, $r_0= r_1\cdot d$ because $[h]$ is epic and
$$
r_0\cdot [h] = [r_0 \cdot h] = [r_1\cdot h'] = r_1\cdot[h'] = r_1 \cdot d\cdot [h]\,.
$$
Analogously for $r_1 = r_0\cdot d$.

\vskip 1mm
(3) Transitivity. We are given morphisms $s, s', s''\colon S\to A$ for which factorizations $t$ and $t'$ through $r_0, r_1$ below exist: 
$$
\xymatrix@C=3pc@R=3pc{
& S\ar[dl]_{s} \ar[d]^{t} \ar[dr]^{s'} &\\
A& R \ar[l]_{r_0} \ar[r]^{r_1} & A\\
& S\ar[ul]^{s'} \ar[u]_{t'} \ar[ur]_{s''} &
}
$$
Our task is to find $t'' \colon S\to R$ with
$$ s= r_0\cdot t''\quad \mbox{and} \quad s''= r_1 \cdot t''\,.
$$
Since $r_0, r_1$ is a transitive relation with respect to $G$, the set-theoretical relation $\hat R$ on $\ck(G, R)$ consisting of all pairs $(r_0\cdot h, r_1\cdot h)$ for $h\colon G\to R$ is transitive. Consider an arbitrary morphism $g\colon G\to S$. Due to $t$, the pair $(s\cdot g, s'\cdot g)$ lies in $\hat R$; due to $t'$ the pair $(s'\cdot g, s''\cdot g)$ also lies there. Thus, $(s\cdot g, s''\cdot g) \in \hat R$. Hence for each $g\colon G\to S$ there exists $\bar g \colon G\to R$ with
$$
s\cdot g= r_0 \cdot \bar g\quad \mbox{and} \quad s''\cdot g = r_1\cdot \bar g\,.
$$
Let $u_0$, $u_1 \colon U\to  \coprod\limits_{g\colon G\to S} G$ be a pair with coequalizer
$[g]\colon \coprod\limits_{g\colon G\to S} G \to S$. The morphism $[\bar g]\colon \coprod\limits_{g\colon G\to S} G \to R$ merges $u_0$, $u_1$. Indeed, for $r_0$ we have
$$
r_0 \cdot [\bar g] \cdot u_i = [r_0 \cdot \bar g]\cdot u_i = [s\cdot g] \cdot u_i = s\cdot [g]\cdot u_i
$$
which is independent of $i=0,1$. The same holds for $r_1$. We thus get a morphism
$$
t'' \colon S\to R \quad \mbox{with}\quad  [\bar g] = t'' \cdot [g]\,.
$$
It has the desired properties: $s = r_0 \cdot t''$ follows from
$$
s\cdot[g] = [s\cdot g] = [r_0 \cdot \bar g] = r_0 \cdot [\bar g] = r_0 \cdot t'' \cdot [g]\,.
$$
Analogously for $s'' = r_1\cdot t''$.
\end{proof}

We now recall Barr-exactness. In his paper \cite{B} Barr does not require finite limits: only kernel pairs are included in his definition.

\begin{defi}[\cite{B}]\label{D:Barr}
A category is \emph{exact} if

(1) Kernel pairs and their coequalizers exist.

(2) Congruences are effective.

(3) Regular epimorphisms are stable under pullback.
\end{defi}
We have mentioned in the Introduction the claim in Lawvere's thesis (\cite[Thm. 3.2.1]{L}) that varieties are characterized by having finite limits, effective congruences and a generator with copowers which is an abstractly finite regular projective.  Here is a counter-example.

\begin{example}\label{E:cont}
The following category $\Set^\ast$ is not equivalent to a variety: we add to $\Set$ a formal terminal object $\ast$ (with $\Set (\ast, X) =\emptyset$ for all sets $X$). Then the monomorphism  $1\to \ast$ demonstrates that no object of $\Set^\ast$ is a strong generator. In contrast, free algebras in varieties are strong generators.

The category $\Set^\ast$ has finite limits: $\Set$ is closed under  nonempty limits in $\Set^\ast$. A product $X\times  \ast$ where $X$ is a set is $X$ itself, and there are no  new parallel pairs of distinct morphism in $\Set^\ast$.
Effectivity of congruences in $\Set^\ast$ 
 also follows from this fact. Finally, $1$ is an abstractly finite, regularly projective generator of $\Set$.
 \end{example}

\begin{remark}
As observed in \cite{A} another source of counter-examples are non-complete lattices with a top element.
\end{remark}

As mentioned in the Introduction, Lawvere proved in \cite{L} Theorem \ref{th1.1}. Several authors presented various simplifications. For example Pedicchio and Wood \cite{PW} showed that effective congruences can be deleted in case the hom-functor of thegenerator in (3) is assumed to  preserve reflexive coequalizers. This has led to the following

\begin{defi}[\cite{ARV}]
An object is \emph{effective} if its hom-functor preserves coequalizers of congruences.
\end{defi}

\begin{prop}\label{P:eff}
Let $\ck$ be a category with kernel pairs and their coequalizers. For every regularly projective strong generator $G$ we have the equivalence
\begin{center}
$G$ effective $\Leftrightarrow  \ck$ has effective congruences.
\end{center}
\end{prop}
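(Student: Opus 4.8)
The plan is to reduce both implications to two elementary facts about kernel pairs and coequalizers, together with the observations that $\ck(G,-)$ preserves kernel pairs (a hom-functor preserves all limits), that a regular projective sends regular epimorphisms to surjections, and that $G$ is a regular generator by Lemma~\ref{L:reg}, so the canonical morphisms $[g]$ are regular epimorphisms. The two facts, valid whenever the relevant kernel pairs and coequalizers exist, are: (i) if a pair $p_0,p_1$ is the kernel pair of some morphism and its coequalizer $c$ exists, then $p_0,p_1$ is already the kernel pair of $c$ (the comparison morphisms between $p_0,p_1$ and the kernel pair of $c$ are mutually inverse, both pairs being collectively monic); and (ii) every regular epimorphism $c=\coeq(r_0,r_1)$ is the coequalizer of its own kernel pair $k_0,k_1$, since any morphism coequalizing $k_0,k_1$ also coequalizes $r_0,r_1$ (as $r_0,r_1$ factorizes through $k_0,k_1$) and hence factorizes uniquely through $c$.

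For ($\Leftarrow$), assume $\ck$ has effective congruences and let $r_0,r_1\colon R\to A$ be a congruence with coequalizer $c$. Effectivity gives a morphism whose kernel pair is $r_0,r_1$, so by (i) the pair $r_0,r_1$ is the kernel pair of $c$. Applying $\ck(G,-)$, the pair $\ck(G,r_0),\ck(G,r_1)$ is the kernel pair of $\ck(G,c)$ in $\Set$, and $\ck(G,c)$ is surjective because $c$ is a regular epimorphism and $G$ is regularly projective. In $\Set$ every surjection is the coequalizer of its kernel pair, whence $\ck(G,c)=\coeq(\ck(G,r_0),\ck(G,r_1))$; thus $G$ is effective.

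For ($\Rightarrow$), assume $G$ is effective and let $r_0,r_1\colon R\to A$ be an arbitrary congruence with coequalizer $c$; I would show it is the kernel pair of $c$. Effectivity of $G$ yields $\ck(G,c)=\coeq(\ck(G,r_0),\ck(G,r_1))$ in $\Set$, so the kernel-pair relation of $\ck(G,c)$ on $\ck(G,A)$ equals the equivalence relation generated by the image of $(\ck(G,r_0),\ck(G,r_1))$; since $r_0,r_1$ is a congruence this image is already an equivalence relation, so the two relations coincide. I would then verify the universal property of the kernel pair of $c$ directly. Given $s_0,s_1\colon S\to A$ with $c\cdot s_0=c\cdot s_1$, for each $g\colon G\to S$ the pair $(s_0\cdot g,s_1\cdot g)$ lies in the kernel-pair relation of $\ck(G,c)$, hence lifts to some $\bar g\colon G\to R$ with $r_i\cdot\bar g=s_i\cdot g$ ($i=0,1$). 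Assembling these over the regular epimorphism $[g]\colon\ck(G,S)\cdot G\to S$ gives $[\bar g]$ with $r_i\cdot[\bar g]=s_i\cdot[g]$; collective monicity of $r_0,r_1$ forces $[\bar g]$ to coequalize the kernel pair of $[g]$, and since by (ii) $[g]$ is the coequalizer of that kernel pair, $[\bar g]$ descends to the desired $w\colon S\to R$ with $r_i\cdot w=s_i$. Uniqueness of $w$ is immediate from collective monicity.

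I expect the existence half of the universal property in ($\Rightarrow$) to be the main obstacle: the liftings $\bar g$ are available only after probing with $G$, and turning them into a single $w\colon S\to R$ is exactly the step requiring $G$ to be a regular generator with copowers (Lemma~\ref{L:reg}), so that $[g]$ is a regular epimorphism and $[\bar g]$—which collective monicity shows respects the kernel pair of $[g]$—descends along it. The remaining steps are formal manipulations of kernel pairs and coequalizers.
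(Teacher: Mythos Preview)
Your proof is correct. The ($\Leftarrow$) direction matches the paper's argument exactly. For ($\Rightarrow$) you take a slightly different route: the paper works only at the level of $G$-probes, showing that $\ck(G,r_0),\ck(G,r_1)$ is the kernel pair of $\ck(G,c)$ in $\Set$, and leaves the passage to the full kernel-pair property in $\ck$ implicit (one compares $R$ with the actual kernel pair $K$ of $c$; the comparison $R\to K$ is monic, and since every morphism $G\to K$ factors through it, the strong-generator property forces it to be invertible). You instead verify the universal property for an arbitrary $S$ directly, assembling the $G$-probes $\bar g$ into $[\bar g]$ and descending along the regular epimorphism $[g]$---precisely the technique of Proposition~\ref{L:G}. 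Your argument is more explicit and self-contained, at the cost of a little more bookkeeping; the paper's is terser but asks the reader to supply the final step.
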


\begin{proof}
(1) Let $\ck$ have effective congruences. Given a regular epimorphism $c\colon A\to C$ and  its kernel pair $r_0, r_1\colon R\to A$, our task is to prove that the map
$$
c\cdot(-) \colon \ck(G,A) \to \ck (G,C)
$$ 
is a coequalizer of $r_i \cdot (-)$ for $i=0,1$. Since $G$ is a regular generator (Proposition \ref{L:reg}), the map $c\cdot (-)$ is a regular epimorphism. Thus, we only need to verify that it has the kernel pair $r_i\cdot(-)$. Indeed, let $c\cdot(-)$ merge a pair in $\ck(G, A)$, say, $c\cdot f_0 = c\cdot f_1$. Then there is a unique $f'\colon G\to R$ with $f_i = r_i \cdot f'$ ($i=0,1$).

\vskip 1mm
(2) Suppose that $\ck (G, -)$ is effective. Let $r_0, r_1\colon R\to A$ be a congruence. Since $\ck(G,-)$ is faithful and preserves pullbacks, the pair  $\ck(G, r_0)$, $\ck(G, r_1)\colon \ck(G,R) \to \ck(G,A)$ is a congruence in $\Set$. We know that the coequalizer $c\colon A\to C $ of $r_0, r_1$ yields a coequalizer $\ck(G,c)$ of $\ck(G, r_i)$. It follows that the above  pair  is a kernel pair of $\ck(G,c)$.

To verify that $r_0$, $r_1$ is the kernel pair of $c$,  be $u_0$, $u_1 \in \ck(G,A)$ fulfil $c\cdot u_0 = c\cdot u_1$. Since the relation  of all $(r_0 \cdot v, r_1 \cdot v)$ for $v \colon G \to R$ is an equivalence,
and $c \cdot (-)$ is its quotient map, there is a unique $v \in \ck(G,R)$ with $u_i=r_i\cdot v$ ($i=0,1$).
\end{proof}

We now prove the main result of the present section. We use the monadicity theorem of Linton:

\begin{theorem}[{\cite[Prop. 3]{Li}}]\label{T:L}
A functor $U\colon \ck \to \Set$ is monadic iff

{\rm(a)} $U$ is right adjoint.

{\rm(b)} $\ck$ has kernel pairs  and coequalizers of congruences.

{\rm(c)} $U$ preserves and reflects congruences.

{\rm(d)} $U$ preserves and reflects regular epimorphisms.
\end{theorem}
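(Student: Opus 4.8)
The plan is to deduce the theorem from Beck's monadicity theorem, reformulating Beck's condition on $U$-split pairs in the congruence language of (b)--(d). The ``only if'' direction is routine: for $\ck=\Set^T$ and $U=U^T$ the forgetful functor is a right adjoint (a), creates and hence preserves and reflects all limits, so it preserves and reflects kernel pairs and thus congruences (c); regular epimorphisms in $\Set^T$ are exactly the underlying surjections, giving (d); and kernel pairs together with coequalizers of congruences exist (b). I would spend essentially no time here.

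For the ``if'' direction, (a) yields an adjunction $F\dashv U$ with unit $\eta$ and counit $\varepsilon$, the monad $T=UF$, and the comparison functor $K\colon\ck\to\Set^T$ with $U^T K=U$ and $KF=F^T$; the goal is that $K$ is an equivalence. First I would record two cheap facts. $U$ is \emph{faithful}: each counit $\varepsilon_X$ is a regular (hence an) epimorphism, since $U\varepsilon_X$ is split epic in $\Set$, so regular epic there, and (d) reflects this; and if $Uf=Ug$ for $f,g\colon X\to Y$, then $f\cdot\varepsilon_X$ and $g\cdot\varepsilon_X$ have equal transposes along $F\dashv U$ (both equal $Uf=Ug$, by the triangle identity $U\varepsilon_X\cdot\eta_{UX}=\id$), hence agree, and $\varepsilon_X$ epic gives $f=g$. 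Next $U$ is \emph{conservative}: (d) reflects regular epimorphisms and, $U$ preserving kernel pairs, faithfulness reflects monomorphisms, so a morphism with invertible image is at once regular epic and monic, thus invertible.

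The heart is the following uniform statement, proved once and used twice: for every $X$ the counit $\varepsilon_X$ is the coequalizer in $\ck$ of its own kernel pair, and $U$ preserves this coequalizer. Indeed, let $(k_0,k_1)$ be the kernel pair of $\varepsilon_X$ (it exists by (b) and is a congruence), put $c=\coeq(k_0,k_1)$ (by (b)), and factor $\varepsilon_X=m\cdot c$. Since $U$ preserves kernel pairs, $(Uk_0,Uk_1)$ is the kernel pair of $U\varepsilon_X$, and in $\Set$ the surjection $U\varepsilon_X$ is the coequalizer of its own kernel pair; comparing this with $Uc$ shows $Um$ invertible, whence $m$ is invertible by conservativity. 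Thus $\varepsilon_X=\coeq(k_0,k_1)$ and $U$ carries it to $\coeq(Uk_0,Uk_1)=U\varepsilon_X$. Transposing this preserved presentation along $F\dashv U$ and $F^T\dashv U^T$ identifies both $\ck(X,Y)$ and $\Set^T(KX,KY)$ with the same equalizer of the pair $\Set(UX,UY)\rightrightarrows\Set(TUX,UY)$ cutting out the $T$-algebra morphisms; hence $K$ is full, and with faithfulness, fully faithful.

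It remains to show $K$ is \emph{essentially surjective}, and this is where I expect the real difficulty. Given a $T$-algebra $(A,a)$, its structure map presents it as the split coequalizer $a=\coeq(Ta,\mu_A)$ in $\Set$, with sections $\eta_A,\eta_{TA}$; the kernel pair $W\rightrightarrows TA$ of $a$ is therefore an effective equivalence relation on $TA=UFA$. If I can exhibit a congruence $(\rho_0,\rho_1)$ on the free object $FA$ in $\ck$ whose $U$-image is this equivalence relation, then its coequalizer $e\colon FA\to X$ exists by (b), and the argument of the previous paragraph, applied to the effective congruence $(\rho_0,\rho_1)$, gives $Ue=\coeq(W)=a$, so $KX\cong(A,a)$. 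The obstacle is precisely the construction of this congruence: the idempotent $\eta_A\cdot a$ cutting out $W$ on $UFA$ does \emph{not} lift to an endomorphism of $FA$, so $W$ cannot be realized as the kernel pair of a morphism visibly present in $\ck$. The plan is to reach $(\rho_0,\rho_1)$ by transporting $W$ back along $U$ using (c): one forms a parallel pair over $FA$ from the canonical data $\varepsilon_{FA}$ and $Fa$ whose $U$-image generates $W$, and then invokes reflection of congruences (c) together with reflection of regular epimorphisms (d) to recognise the resulting relation as a genuine congruence in $\ck$ with the required quotient. Verifying that this transported relation is exactly $W$, and not merely a relation generating it, is the delicate point on which the whole ``if'' direction turns; once it is in hand, conservativity and the preservation statement above complete the proof.
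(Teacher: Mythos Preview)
The paper does not prove this theorem. It is stated with a citation to Linton \cite[Prop.~3]{Li} and then used as a black box in the proof of Theorem~\ref{T:main}; no argument for it appears anywhere in the paper. There is therefore nothing here to compare your proposal against.

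On the proposal itself: the ``only if'' direction and your arguments for faithfulness, conservativity of $U$, the presentation of each $\varepsilon_X$ as a $U$-preserved coequalizer of its kernel pair, and full faithfulness of the comparison $K$ are all sound and standard. The essential-surjectivity step, however, is not carried out; you describe a plan and candidly flag it as the delicate point. The plan as written has a real gap: condition~(c), ``$U$ reflects congruences'', applies only to a parallel pair in $\ck$ whose $U$-image is already a congruence in $\Set$ (in particular collectively monic). The pair $(F a,\varepsilon_{FA})$ has $U$-image $(Ta,\mu_A)$, which is \emph{not} collectively monic, so (c) says nothing about it directly, and ``reflecting'' cannot by itself manufacture the desired relation $R\rightrightarrows FA$ over $W$. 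What is needed is first to produce a genuine relation in $\ck$ from these data---for instance via the (regular epi, mono) factorizations that (b) and (d) together with faithfulness of $U$ do supply---and then to identify its $U$-image with the kernel pair $W$ of $a$, after which (c) and (b) finish the job. That identification is exactly the content you defer, and without it the ``if'' direction is not complete.
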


\begin{theorem}\label{T:main}
A category is equivalent to a variety iff it is exact and has a varietal generator.
\end{theorem}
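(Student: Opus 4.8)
The plan is to prove both directions via Linton's monadicity theorem (Theorem \ref{T:L}). For the forward direction, if $\ck$ is equivalent to a variety then it is well known to be exact and the free algebra on one generator is a varietal generator, so this direction is routine. The substance lies in the converse: assuming $\ck$ is exact with a varietal generator $G$, I want to show that $U = \ck(G,-)\colon \ck \to \Set$ is monadic and that the induced monad is finitary, which together give that $\ck$ is equivalent to a finitary variety (the category of algebras for a finitary monad on $\Set$, hence a variety in the classical sense).

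First I would verify the four conditions of Linton's theorem for $U=\ck(G,-)$. Condition (a), that $U$ is a right adjoint, follows from the existence of copowers $M\cdot G$: these provide the left adjoint $M \mapsto M\cdot G$, with the adjunction $\ck(M\cdot G, X)\cong \Set(M, \ck(G,X))$. Condition (b) holds since $\ck$ has kernel pairs and coequalizers of kernel pairs by exactness, and congruences are effective (hence are kernel pairs), so their coequalizers exist. For condition (c), $U$ preserves kernel pairs because hom-functors preserve limits, and it reflects congruences essentially by Remark \ref{R:eq}(2) together with the fact that $G$ is a generator (so $U$ is faithful) and exactness (effective congruences); the preservation of congruences is clear since a kernel pair goes to a kernel pair in $\Set$. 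Condition (d) is where the projectivity and generation hypotheses enter: $U$ preserves regular epimorphisms precisely because $G$ is a regular projective, and $U$ reflects them because $G$ is a strong generator (a morphism inducing a surjection on hom-sets out of $G$ and factoring as $m\cdot c$ with $m$ monic forces $m$ invertible, as in the proof of Lemma \ref{L:stable}). Here I would lean on Proposition \ref{L:G} and Lemma \ref{L:reg} to pass freely between the relative and absolute notions of congruence and to know that $G$ is in fact a regular generator.

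With Linton's theorem giving monadicity, it remains to check that the monad $\T = U F$ is \emph{finitary}, i.e. preserves filtered (directed) colimits; this is what upgrades ``monadic over $\Set$'' to ``variety''. This is exactly where abstract finiteness of $G$ is used. The monad on a set $M$ is $\T M = U(M\cdot G) = \ck(G, M\cdot G)$, and abstract finiteness (Definition \ref{D:abs}) says every element of $\ck(G,M\cdot G)$ factorizes through a finite subcopower $M_0\cdot G$; this is precisely the statement that $\T M$ is the directed union of the images of $\ck(G, M_0\cdot G)$ over finite $M_0\subseteq M$, which gives that $\T$ preserves filtered colimits of the free diagrams and hence is finitary. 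Equivalently, via Lemma \ref{L:fg}(2), abstract finiteness is the same as $G$ being finitely generated, and finite generation of $G$ translates directly into $U$ preserving directed colimits.

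The main obstacle I anticipate is condition (d) of Linton's theorem, specifically the reflection of regular epimorphisms and the careful handling of congruences versus congruences-with-respect-to-$G$; making precise that a strong generator reflects regular epimorphisms requires the regular factorization and stability results of Lemma \ref{L:stable}, and one must be attentive that exactness supplies both the effectivity of congruences and the stability under pullback needed to keep the factorization system well-behaved. The finitariness argument, while conceptually the crux of getting an honest variety rather than just a monadic category, should follow cleanly once abstract finiteness is rephrased as finite generation; the genuine care is in assembling Linton's four conditions without assuming more limits than exactness provides.
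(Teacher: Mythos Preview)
Your proposal is correct and follows essentially the same route as the paper: verify Linton's conditions (a)--(d) for $U=\ck(G,-)$ using Lemma~\ref{L:reg} and Proposition~\ref{L:G} for the congruence conditions, then invoke Lemma~\ref{L:fg}(2) to get finitariness of the monad. One small correction: contrary to your final paragraph, the paper's argument for (d2) (reflection of regular epimorphisms) does \emph{not} use pullback-stability at all---it proceeds directly by forming the coequalizer of the kernel pair and showing the comparison map is monic via the strong-generator and regular-projective hypotheses, and the paper explicitly remarks afterward that stability was nowhere used (it comes for free from Lemma~\ref{L:stable}).
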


\begin{proof}
Necessity. Every variety $\cv$ is well known to be a cocomplete and  exact category. Its free algebra $G$ on one generators an abstractly finite object since it is finitely generated (Lemma \ref{L:fg}). It is a regular  projective: regular epimorphisms are precisely the surjective homomorphisms and $\cv(G,-)$ is naturally isomorphic to the forgetful functor. Finally, $G$ is a strong generator since its copowers are  the free algebras of $\cv$.

Sufficiency. Let $\ck$ be an exact category and $G$  be a varietal generator. For the hom-functor
$$
U =\ck(G,-) \colon \ck \to  Set\,
$$
we prove that it is monadic and the corresponding monad is finitary. Consequently, $\ck$ is equivalent to a variety.

(1) $U$ is monadic. Indeed, $U$ has the left adjoint $M\mapsto M\cdot G$.

We thus only need to verify (c) and (d) in Linton's theorem.

(c1) $U$ preserves congruences. In fact, let $r_0$, $r_1\colon R\to A$ be a congruence. Since $U$ is faithful, $Ur_0$, $Ur_1$ is collectively monic. The relation $Ur_0$, $Ur_1$ in $\Set$ represents the set-theoretical relation $\hat R$ on $\ck(G,R)$ defined  by
$$
\hat R =\big\{(r_0\cdot g, r_1\cdot g); g\colon G\to R\big\}\,.
$$
Since $r_0$, $r_1$ is reflexive, so is $\hat R$: given $d\colon A\to R$ with $r_0\cdot d=\id=r_1\cdot d$, we have, for each $h\colon G\to A$
$$
(h,h) = (r_0 \cdot d\cdot h, r_1\cdot d\cdot h)\in \hat R\,.
$$
Analogously, $\hat R$ is symmetric. To verify  transitivity, let $(r_0\cdot g, r_1\cdot g)$ and 
$(r_0\cdot g', r_1\cdot g')$ be members of $\hat R$ with $r_1\cdot g = r_0\cdot g'$. The pair $r_0\cdot g$, $r_1\cdot g$ also factorizes through $r_0, r_1$ via $g$, and the pair $r_0\cdot g, r_1\cdot g'$ factorizes via $g'$. Since  $r_0, r_1$ is transitive, the pair $r_0\cdot g, r_1\cdot g'$ factorizes through $r_0, r_1$: we have $g''$ with
$$
r_0 \cdot g= r_0 \cdot g''\quad \mbox{and} \quad r_1\cdot g'=r_1\cdot g''\,.
$$
This proves $(r_0\cdot g, r_1\cdot g') \in \hat R$, as desired.

(c2) $U$ reflects congruences. Let $r_0, r_1\colon R\to A$ be a pair such that $U r_0, Ur_1$ is a congruence. Since $G$ is a generator, the fact that $Ur_i = r_i\cdot (-)$ is a collectively monic pair for $i=0,1$ implies that $r_0, r_1$ is collectively monic. To say that $Ur_0, Ur_1$ is a congruence means that $r_0, r_1$ is a congruence with respect to $G$ (Remark \ref{R:G}).

Since $G$ is a regular generator (Proposition \ref{L:reg}) the proof follows from Lemma \ref{L:G}.

(d1) $U$ preserves regular epimorphisms because $G$ is a regular projective.

(d2)  $U$ reflects regular epimorphisms. That is, given a morphism $e\colon A\to B$ such that every morphism $g\colon G\to B$ factorizes through it, we verify that $e$ is a coequalizer of its kernel pair $r_0, r_1\colon R\to A$.

Let $c\colon A\to C$ be a coequalizer of $r_0, r_1$, and let $h$ make the triangle below commutative:
$$
\xymatrix@C=4pc@R=3pc{
& G\ar[dl]_{v} \ar@<0.5ex>[d]^{v_1}
\ar@<-0.5ex>[d]_{v_0}
\ar@<0.5ex>[r]^{u_1}
\ar@<-0.5ex>[r]_{u_0} & C \ar[d]^{h}\\
R \ar@<0.5ex>[r]^{r_1}
      \ar@<-0.5ex>[r]_{r_0} &
A \ar[r]_{e} \ar[ur]^{c} & B
}
$$
 We prove that $h$ is an isomorphism, thus, $e=\coeq (r_0, r_1)$. Every morphism $g\colon G\to B$ factorizes through $e$, hence also through $h$. Thus to verify that $h$ is invertible, it is sufficient to prove that it is monic (using that $G$ is a strong generator). Indeed, for every pair $u_0, u_1\colon G\to C$ with
 $$
 h\cdot u_0 = h\cdot u_1
 $$
 we derive $u_0=u_1$ Since $c$ is a regular epimorphism, we have  $v_i$ with $u_i=c\cdot v_i$. We derive that
 $$
 e\cdot v_0 = h\cdot c\cdot v_0 = h\cdot c \cdot v_1 = e\cdot v_1\,.
 $$
 Therefore there is $v\colon G\to R$ with $v_i = r_i \cdot v$. Thus
 $$
 u_i = c\cdot v_i = c\cdot r_i\cdot v
 $$
 is independent of $i=0,1$.

(ii) The functor $T=UF$, where $F$ is the left adjoint of $U$, is finitary because $G$ is finitely generated (Lemma \ref{L:fg}). Indeed, $F$ preserves directed colimits of nonempty monomorphisms and these monomorphisms split. Consequently, $T=\ck(G,-)\cdot F$ preserves these colimits, too. Given an infinite set $X$, express it as the directed colimit of all of its finite nonempty subsets. Since $T$ preserves this colimit, for every element $x\in TX$ there exists a finite subset $m\colon M\hookrightarrow X$ such that $x$ lies in $Tm[TM]$. By \cite[Thm 3.4]{AMSW}, this implies that $T$ is finitary.
 \end{proof}
  
 Observe that we have not used the stability of regular epimorphisms under pullback in the above proof. (No surprise -- see Lemma \ref{L:stable}.)  We thus get, using Proposition \ref{P:eff}, the following statement slightly improving Corollary 36 of \cite{ARV}.
 \begin{corollary}
 A category is equivalent to a variety iff it has 
 
{\rm (1)} Kernel pairs and their coequalizers.

{\rm (2)} An effective, abstractly finite, strong  generator with copowers.
\end{corollary}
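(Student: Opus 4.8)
The plan is to derive the corollary from Theorem \ref{T:main} by reading its two conditions as a repackaging of ``exact $+$ varietal generator,'' with regular projectivity of $G$ and effectivity of congruences both folded into the single hypothesis that $G$ is effective. For necessity I would recall from the proof of Theorem \ref{T:main} that the free algebra $G$ of a variety $\cv$ on one generator is a varietal generator, namely an abstractly finite strong generator with copowers that is regularly projective; since a variety has effective congruences, Proposition \ref{P:eff} then makes $G$ effective, and cocompleteness supplies kernel pairs and their coequalizers. Hence (1) and (2) hold.

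For sufficiency the only step with real content is to show that in a category with kernel pairs and their coequalizers every effective object is automatically regularly projective. Here I would use the standard fact that a regular epimorphism $e\colon A\to B$ is the coequalizer of its own kernel pair $r_0,r_1$: clearly $e$ coequalizes $r_0,r_1$, and if $e=\coeq(f,g)$ then $(f,g)$ factors through the kernel pair, so anything coequalizing $r_0,r_1$ coequalizes $(f,g)$ and thus factors uniquely through $e$. This coequalizer exists by hypothesis and the kernel pair is a congruence, so effectivity of $G$ gives $\ck(G,e)=\coeq(\ck(G,r_0),\ck(G,r_1))$ in $\Set$, which is a surjection. Therefore $\ck(G,-)$ carries every regular epimorphism to a surjection, i.e.\ $G$ is regularly projective.

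The remainder is bookkeeping. The generator is now a regularly projective, abstractly finite strong generator with copowers, hence a varietal generator in the sense of Definition \ref{D:vg}; Proposition \ref{P:eff}, whose hypotheses are now met, upgrades effectivity of $G$ to effectivity of all congruences, and Lemma \ref{L:stable} supplies stability of regular epimorphisms under pullback, so $\ck$ is exact (Definition \ref{D:Barr}) and Theorem \ref{T:main} applies. Since the proof of Theorem \ref{T:main} never used stability, one may equally well bypass Lemma \ref{L:stable} and feed ``kernel pairs and their coequalizers $+$ effective congruences $+$ varietal generator'' directly into that proof. I expect the whole difficulty to be concentrated in the second paragraph: the observation that effectivity forces regular projectivity is precisely what allows the explicit regular-projectivity hypothesis to be dropped from the statement.
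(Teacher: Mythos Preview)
Your proof is correct and follows the paper's approach: the paper's entire justification is the sentence ``we have not used stability of regular epimorphisms under pullback in the above proof \ldots\ we thus get, using Proposition \ref{P:eff}, the following statement,'' and your argument unpacks exactly this. In fact you are more careful than the paper: Proposition \ref{P:eff} has ``$G$ regularly projective'' among its hypotheses, so to invoke it from condition (2) of the corollary one must first observe that an effective object is automatically regularly projective---your second paragraph supplies precisely this step, which the paper leaves implicit.
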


\section{Reflexive Coequalizers}\label{sec3}

Before turning to order-enriched varieties in Section \ref{sec4}, we prove an auxiliary proposition for enriched categories in general.  In the present section we assume that a symmetric monoidal closed category
$$
(\cv, \otimes, I)
$$
is given (which in Section \ref{sec4} will be the cartesian closed category of posets).

\begin{remark}\label{re3.1}
Let $\ck$ be an enriched category. When speaking about ordinary colimits (coproducts, coequalizers, etc.) we always mean the conical ones: weighted colimits with the weight constant with value $I$.

\emph{Reflexive coequalizers} are (conical) coequalizers of pairs $r_0, r_1\colon R\to X$ that are reflexive: there is $d\colon X\to R$ with $r_i\cdot d= \id_X$.
\end{remark}

\begin{defi}[{\cite{K}}] 
A full subcategory $\ca$ of an enriched category $\ck$ is \emph{dense} if the functor
$$
E\colon \ck \to [\ca^{\op}, \cv]
$$
assigning to $K\in \ck$ the restriction of $\ck(-, A)$ to $\ca^{\op}$ is fully faithful.
\end{defi}

\begin{remark}\label{R:ten}
 Recall that an object $G$ of an enriched category  \emph{has tensors} if the hom-functor $\ck)G, -) \colon \ck \to \cv$ has a  left adjoint $F$.  The notation is $P\otimes G$ for $FP$.
 \end{remark}
 
\begin{prop}\label{P:refl} 
Let $\ck$ be an enriched category with reflexive coequalizers and $\ca$ be a small full dense subcategory such that

(1) Objects of $\ca$ have tensors  in $\ck$. 

(2) $\ck$ has coproducts of collections of such tensors.
Then $\ck$ it is equivalent to a full reflective subcategory of $[\ca^{\op}, \cv]$.
\end{prop}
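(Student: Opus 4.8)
The plan is to realize $\ck$ as reflective in $[\ca^{\op},\cv]$ by exhibiting a left adjoint to the fully faithful embedding $E$. Since $\ca$ is dense, $E$ is fully faithful by definition, so it suffices to construct a $\cv$-functor $L\colon [\ca^{\op},\cv]\to \ck$ together with a $\cv$-natural isomorphism $\ck(LP,K)\cong [\ca^{\op},\cv](P,EK)$; once such an $L\dashv E$ is in hand, the counit $LE\Rightarrow\id_\ck$ is automatically invertible (because the right adjoint $E$ is fully faithful), and then $\ck$ is equivalent, via $E$, to the full reflective subcategory of $[\ca^{\op},\cv]$ spanned by the image of $E$, with reflector $EL$. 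Writing $Z\colon \ca\hookrightarrow\ck$ for the inclusion and $Y\colon \ca\to[\ca^{\op},\cv]$ for the Yoneda embedding, the candidate is the pointwise left Kan extension $L=\Lan_Y Z$, whose value at $P$ is the weighted colimit $LP=P\star Z=\int^{A\in\ca}PA\otimes ZA$. Since $EK=\ck(Z-,K)$, the displayed isomorphism is precisely the defining universal property of this weighted colimit, so the whole problem reduces to showing that $P\star Z$ exists in $\ck$ for every $P$.

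To build $P\star Z$ from the hypotheses, I would present the defining coend as a coequalizer of coproducts of tensors:
\[
\coprod_{A,B\in\obj\ca}\ca(A,B)\otimes PB\otimes ZA
\;\rightrightarrows\;
\coprod_{A\in\obj\ca}PA\otimes ZA
\;\longrightarrow\; P\star Z,
\]
where the two parallel maps are induced, respectively, by the contravariant action $\ca(A,B)\otimes PB\to PA$ of $P$ (tensored with $ZA$) and by the covariant action $\ca(A,B)\otimes ZA\to ZB$ of $Z$ (tensored with $PB$). Here the tensors $PA\otimes ZA$ exist by hypothesis (1), their coproducts by hypothesis (2), and the key point is that this parallel pair is \emph{reflexive}: the unit maps $I\to\ca(A,A)$ induce a common section $\coprod_A PA\otimes ZA\to\coprod_{A,B}\ca(A,B)\otimes PB\otimes ZA$ sending the $A$-summand identically into the $(A,A)$-summand, and both composites equal the identity because the action of an identity morphism is an identity. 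Hence the coequalizer is a \emph{reflexive} coequalizer, which exists by assumption; this is exactly why only reflexive coequalizers, rather than all coequalizers, are required.

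It then remains to verify that this conical reflexive coequalizer of tensors genuinely computes the enriched weighted colimit, i.e.\ that it carries the $\cv$-natural universal property $\ck(P\star Z,K)\cong[\ca^{\op},\cv](P,EK)$. This is the standard reduction of weighted colimits to tensors together with conical coproducts and coequalizers: applying $\ck(-,K)$ sends the displayed coequalizer to an equalizer in $\cv$, and the tensor adjunction gives $\ck(PA\otimes ZA,K)\cong\cv(PA,\ck(ZA,K))=\cv(PA,(EK)A)$, so the resulting equalizer is exactly the end $\int_A\cv(PA,(EK)A)=[\ca^{\op},\cv](P,EK)$. Tracking $\cv$-naturality in $K$ through these canonical isomorphisms yields the adjunction $L\dashv E$, and the reasoning of the first paragraph completes the proof.

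I expect the main obstacle to be the interplay of the last two steps: confirming that the coend coequalizer is reflexive (so that the weakened hypothesis suffices), and, more delicately, that a conical colimit assembled only from the tensors of (1)–(2) and a conical reflexive coequalizer in the sense of Remark \ref{re3.1} really enjoys the \emph{enriched} universal property, in the absence of any general cocompleteness. The care lies in ensuring that the only colimits invoked are these, and that $\cv$-naturality is preserved at every stage of the co-Yoneda/end calculus; the remaining manipulations are routine.
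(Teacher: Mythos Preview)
Your proposal is correct and is essentially the paper's proof recast in standard weighted-colimit language: the paper forms exactly the coproducts $X=\coprod_{A}HA\otimes A$ and $Y=\coprod_{f\colon B\to A}HA\otimes B$, observes that the parallel pair $[p_f],[q_f]\colon Y\rightrightarrows X$ is split by the identity-morphism injections, takes its reflexive coequalizer $c\colon X\to H^\ast$, and then verifies by hand that the transposed components $c\cdot i(A)$ assemble into a reflection $\varrho\colon H\to EH^\ast$.

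The one noteworthy difference is in the left-hand vertex of the coequalizer diagram: you tensor by the enriched hom $\ca(A,B)$, giving the genuine coend presentation $\coprod_{A,B}\ca(A,B)\otimes PB\otimes ZA$, whereas the paper indexes $Y$ by the underlying \emph{set} of morphisms of $\ca$. Your version is the robust one for arbitrary $\cv$, since the resulting equalizer after applying $\ck(-,K)$ is literally the end computing $[\ca^{\op},\cv](P,EK)$; the paper's version yields the object of \emph{ordinarily} natural transformations, which coincides with the enriched hom only when the unit $I$ detects equality of morphisms out of $\ca(A,B)$ (as it does for $\cv=\Set$ or $\cv=\Pos$, the intended applications). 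So your phrasing actually buys a cleaner justification of the enriched universal property that the paper leaves somewhat implicit.
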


\begin{proof}
Since the functor $E\colon \ck \to [\ca^{\op}, \cv]$ is fully faithful, we only need to prove that it has a left adjoint. That is, $E[\ck]$ is a reflective subcategory. The reflection of an object $H\colon \ca^{\op}\to \cv$ is given by an object $H^\ast$ of $\ck$ and a natural transformation $\varrho \colon H\to EH^\ast$ we construct now.

We first form coproducts
$$
X= \coprod_{A\in \obj \ca} HA \otimes A \quad \mbox{and} \quad Y= \coprod_{f\colon B\to A}HA \otimes B
$$
with injections
$$
i(A) \colon HA\otimes A\to X \quad \mbox{and}\quad j(f) \colon HA \otimes B\to Y\,.
$$
Every morphism $f\colon B\to A$ of $\ca$ yields a parallel pair $p_f, q_f \colon HA \otimes B\to X$ as follows
$$
\xymatrix@=3pc{
& HA \otimes B \ar[dl]_{HA\otimes f} \ar@<0.5ex>[d]_{p_f\ }
\ar@<-0.5ex>[d]^{\ g_f }
\ar[dr]^{Hf \otimes B}&\\
HA\otimes A \ar[r]_{i(A)} & X & HB\otimes B \ar[l]^{i(B)}
}
$$
The resulting pair $[p_f], [q_f] \colon Y\to X$ is reflexive: both morphisms are split by the morphism
$$
\big[ j(\id_{\ca})\big] \colon X\to Y\,.
$$
The desired object $H^\ast$ of $\ck$ is given by the following coequalizer
$$
\xymatrix@=3pc{
Y \ar@<-.5ex>[r]_{[p_f]}
\ar@<.5ex>[r]^{[q_f]}
& X \ar[r]^{c} & H^\ast
}
$$
The components $\varrho_A \colon HA \to EH=\ck (A, H^\ast)$ are given by their adjoint transposes $c\cdot i(A)$:
$$
\xymatrix@R=.21pc{
&HA \ar[rr]^{\varrho_A}  && \ck (A, H^\ast)&\\
\ar@{-}[rrr]&&&\\
&HA \otimes A \ar[r]_<<<<{i(A)} &X \ar[r]_{c}&&
}
$$
Let us verify the naturality of $\varrho$. Given $f\colon B\to A$ in $\ca$ the square below
$$
\xymatrix@=2pc{
HA \otimes B \ar[dd]_{Hf\otimes B} \ar[rr]^{HA \otimes f}
&&
 HA \otimes A \ar[dd]^{\varrho_A}
              \ar@{-->}[dl]_<<<<<<<<<{i(A)}\\
 & X \ar@{-->}[dr]^{c}& \\
 HB \otimes B \ar[rr]_{\varrho_B}
    \ar@{-->}[ur]^{i(B)}&& H^\ast
    }
    $$
    commutes due to $c\cdot p_f = c\cdot  q_f$. Its adjoint transpose is the desired naturality  square
$$
\xymatrix@=3pc{
HA \ar[r]^{\varrho_A}\ar[d]_{Hf} & \ck(A, H^\ast)\ar[d]^{(-)\cdot f}\\
HB \ar[r]_{\varrho_B} & \ck(B, H^\ast)
}
$$ 

Let us prove the universal property of $\varrho \colon H\to EH^\ast$. Let an object $K\in \ck$ and a morphism $\sigma \colon H\to EK$ (a natural transformation) be given. Then the adjoint transposes of $\sigma_A \colon HA \to \ck (A,K)$
$$
\widehat \sigma_A \colon HA \otimes A \to K
$$
make the following  squares commutative:
$$
\xymatrix@=2pc{
HA \otimes B \ar[dd]_{Hf\otimes B} \ar[rr]^{HA \otimes f}
&&
 HA \otimes A \ar[dd]^{\widehat\sigma_A}
              \ar@{-->}[dl]_<<<<<<<<<{i(A)}\\
 & X \ar@{-->}[dr]^{c}& \\
 HB \otimes B \ar[rr]_{\widehat\sigma_B}
    \ar@{-->}[ur]^{i(B)}&& K
    }
    $$
Therefore, the parallel pair $[p_f], [q_f]$ is merged by
$$
\big[ \widehat\sigma_A\big] \colon X= \coprod_A HA \otimes A \to K\,.
$$
Consequently, there is a unique morphism  $\sigma^\ast \colon H^\ast \to K$ in $\ck$ making the triangle below commutative
$$
\xymatrix@C=3pc@R=2pc{
&& H^\ast \ar[d]^{\sigma^\ast}\\
HA \otimes A \ar@{-->}[r]_{i(A)} & X \ar[ur]^{c} \ar[r]_{[\widehat\sigma_A]} & K
}
$$
Which is equivalent to $\sigma = E\sigma^\ast \cdot \varrho$: indeed, for every $A\in \ca$ we have $$\sigma_A = \sigma^\ast \cdot \varrho_A \Leftrightarrow \ \widehat \sigma_A =\sigma^\ast \cdot \widehat \varrho_A,$$
and this means precisely the commutativity of the above triangle. Thus, $\varrho\colon H\to EH^\ast$ defines a reflection of $H$ in $[\ca^{\op}, \cv]$.
\end{proof}

\begin{corollary}\label{C:refl}
Let $\ck$ have reflexive coequalizers and an object $G$ with tensors such that all finite copowers form a dense full subcategory. If $\cv$ is (co)complete, then so in $\ck$.
\end{corollary}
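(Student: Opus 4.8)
The plan is to deduce the corollary from Proposition \ref{P:refl}, applied to the full subcategory $\ca$ consisting of all finite copowers $n\cdot G$ ($n\in\N$). This $\ca$ is small (its objects form a set indexed by $\N$) and is, by hypothesis, full and dense, so the only clauses of Proposition \ref{P:refl} that require checking are its condition (1), that every object of $\ca$ has a tensor in $\ck$, and its condition (2), that $\ck$ has coproducts of collections of such tensors. Since $\ck$ is assumed to have reflexive coequalizers, once (1) and (2) are in hand the proposition applies directly.

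The first task, then, is to verify these two conditions, and here lies the crucial observation. Write $F=(-)\otimes G$ for the left adjoint of $\ck(G,-)$, which exists because $G$ has tensors. Being a left adjoint, $F$ preserves coproducts; combined with the cocompleteness of $\cv$ this produces, for every family $(P_i)$ in $\cv$, the coproduct $\coprod_i(P_i\otimes G)\cong\big(\coprod_i P_i\big)\otimes G$ in $\ck$. In particular $\ck$ has all finite coproducts of tensors of $G$, whence each finite copower acquires a tensor via the natural isomorphism $P\otimes(n\cdot G)\cong n\cdot(P\otimes G)$ (both sides corepresent $\cv(P,\ck(n\cdot G,-))$, using $\ck(n\cdot G,-)\cong\ck(G,-)^{\,n}$). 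This settles condition (1). For condition (2), a coproduct of tensors of objects of $\ca$ is a coproduct of finite coproducts of tensors of $G$, hence again a coproduct of tensors of $G$, which exists by the displayed formula. Thus both hypotheses of Proposition \ref{P:refl} hold.

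By Proposition \ref{P:refl}, $\ck$ is therefore equivalent to a full reflective subcategory of the presheaf category $[\ca^{\op},\cv]$. It remains to invoke two standard facts. First, $[\ca^{\op},\cv]$ inherits all limits and colimits pointwise from $\cv$, so it is (co)complete whenever $\cv$ is. Second, a full reflective subcategory is closed under limits formed in the ambient category and has all colimits, computed by applying the reflector to the ambient colimit. Combining these, $\ck$ is (co)complete whenever $\cv$ is, as claimed.

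The main obstacle is condition (2) of Proposition \ref{P:refl}: a priori $\ck$ is only assumed to possess reflexive coequalizers and the finite copowers of the single object $G$, so there is no evident supply of the coproducts the reflection construction requires. The decisive point is that the tensor functor $F=(-)\otimes G$, as a left adjoint, transports the coproducts of $\cv$ into $\ck$; this is exactly where the cocompleteness of $\cv$ enters, and it is indispensable even for the completeness half of the statement, since the reflective embedding itself is assembled from these coproducts together with the reflexive coequalizer of Proposition \ref{P:refl}.
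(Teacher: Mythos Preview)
Your proof is correct and follows essentially the same route as the paper: apply Proposition \ref{P:refl} to the subcategory $\ca$ of finite copowers of $G$, verify its conditions (1) and (2) via the isomorphism $P\otimes(n\cdot G)\cong(\coprod_n P)\otimes G$ (which the paper states tersely and you justify more fully via the left adjoint $(-)\otimes G$ preserving coproducts), and then invoke the standard facts that $[\ca^{\op},\cv]$ inherits (co)completeness from $\cv$ and that full reflective subcategories inherit it in turn. Your additional remark that cocompleteness of $\cv$ is already needed to obtain the reflective embedding (and hence even for the completeness conclusion) is a worthwhile clarification the paper leaves implicit.
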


Indeed, the full subcategory $\ca$ of all finite copowers of $G$ satisfies (1) and (2) of the above proposition: for (1) use $P\otimes \big(\coprod\limits_{n} G\big) = \big(\coprod\limits_{n} P\big) \otimes G$. Analogously for (2). If $\cv$ is (co)complete, so is $[\ca^{\op}, \cv]$ (\cite{K}, Section 3.3.3), and since $\ck$ is equivalent to a full reflective subcategory, it is also (co)complete (\cite{K}, Section 3.3.5).

\section{Varieties of Ordered Algebras} \label{sec4}

Here we present a characterization of varieties of ordered algebras analogous to Theorem \ref{T:main}. This follows ideas of \cite{ARO} endowed with the concept of a subexact category introduced below.

\begin{nota} \label{N:free}
(1) Let $\Sigma = (\Sigma_n)_{n<\omega}$ be a signature. We denote by
$$
\Sigma \mbox{-} \Pos
$$
the category of ordered $\Sigma$-algebras with monotone operations as objects, and monotone homomorphisms as morphisms.

(2) Every set is considered as a (discretely ordered) poset. The free $\Sigma$-algebra $T_{\Sigma} X$ on a set $X$ (of all terms in variables from $X$), discretely ordered, is also a free ordered $\Sigma$-algebra. Given an ordered $\Sigma$-algebra $A$ and a map $h\colon X \to A$, we denote by
$$
h^\# \colon T_\Sigma X\to A
$$
the corresponding homomorphism.
\end{nota}

\begin{defi}
 A \emph{variety of ordered algebras} is a full subcategory of $\Sigma$-$\Pos$ presented by inequations $t\leq s$ between terms $t$, $s\in T_{\Sigma} X$. It consists of algebras $A$ such that $h^\#(t) \leq h^\# (s)$ holds for each of the inequations and each interpretation $h\colon X\to A$ of the variables.
\end{defi}

\begin{remark}
The category $\Pos$ is cartesian closed with $[X,Y] =\Pos(X,Y)$ ordered pointwise ($f\leq g\colon X\to Y$ means $f(x) \leq g(x)$ for each $x\in X$). Categories enriched over it are called  order-enriched. This means a category endowed with partial order on each hom-set such that composition ismonotone. Given order-enriched categories $\ck$ and $\cl$, a functor $F\colon \ck \to \cl$ is enriched iff it is locally monotone: given $f\leq g$ in $\ck(X,Y)$, we have $Ff\leq Fg$ in $\cl(X,Y)$.
\end{remark}

\begin{example}
$\Sigma$-$\Pos$ is enriched with the pointwise order on hom-sets. Every variety is thus also enriched.
\end{example}

Whereas coequalizers and regular epimorhisms  play a central role in the characterization of  classical varieties, the corresponding role is taken by coinserters and subregular epimorphisms in $\Sigma$-$\Pos$.

\begin{defi}\label{D:coins}
Let $f_0$, $f_1\colon X\to Y$ be morphisms of an order-enriched category. (We use  indices $0$, $1$ to indicate that $f_0$ comes first and $f_1$ second. We do \emph{not} assume $f_0\leq f_1$ in $\ck(X,Y)$.)

Their \emph{coinserter} is the universal morphism $c\colon Y\to Z$ with respect to $c\cdot f_0 \leq c\cdot f_1$ in $\ck(X,Y)$. That is: 
\begin{enumerate}
\item[(1)] Every morphism $c'\colon Y\to Z'$ with $c'\cdot f_0\leq c\cdot f_1$ factorizes through~$c$.

\item[(2)] Given $u_0$, $u_1\colon Z\to U$ with $u_0\cdot c\leq u_1\cdot c$, it follows that $u_0\leq u_1$.
\end{enumerate}
\end{defi}

\begin{example}
In $\Pos$ the coinserter of $f_0$, $f_1\colon X\to Y$ is given as follows.
Recall that a \emph{preorder} is a reflexive and transitive relation. The \emph{posetal reflection} of a preordered set $(Y, \sqsubseteq)$ is the quotient modulo the equivalence $\sim$ with $y\sim y'$ iff $y\sqsubseteq y' \sqsubseteq y$.

Let $\sqsubseteq$ be the  least preorder on $Y$ with $f_0(x) \sqsubseteq f_1(x)$ (for all $x\in X$) and containing the order of $Y$. The coinserter $c \colon (Y, \leq) \to C$ of $f_0, f_1$ is given by the posetal reflection 
$$
c\colon (Y, \sqsubseteq ) \to (Y, \sqsubseteq )/ \sim =C.
$$

\end{example}

\begin{defi}[\cite{ARO}]\label{D:sub}
A morphism $c\colon Y\to Z$ in an order-enriched category is a \emph{subregular epimorphism} if  it is a coinserter of a reflexive parallel pair  $f_0$, $f_1$:
$$
\xymatrix@C=4pc@R=3pc{
X \ar@<0.5ex>[r]^{f_1}
\ar@<-0.5ex>[r]_{f_0}
&
Y \ar@/_2pc/[l]_{d} \ar[r]^{c} &
Z
}\qquad f_0\cdot d=\id_Y = f_1\cdot d\,.
$$
\end{defi}

\begin{example} 
Subregular epimorphisms in $\Pos$, and more generally in $\Sigma$-$\Pos$, are precisely the surjective homomorphisms (\cite{ARO}, Prop. 4.4).

(2) If an order-enriched category has finite coproducts, then we have
\begin{center}
regular epi $\Rightarrow$ subregular epi $\Rightarrow$ epi
\end{center}
(\cite{ARO}, Ex. 3.4).

\end{example}

\begin{defi}
Let $\ck$ be an order-enriched category.

\vskip1mm
(1) A \emph{relation} on an object $A$ is a parallel pair $r_0$, $r_1\colon R\to A$ which is \emph{collectively order-reflecting}: whenever morphisms $f$, $f'\colon X \to R$ fulfil $r_0\cdot f \leq r_0\cdot f'$ and $r_1\cdot f \leq r_1\cdot f'$, then $f\leq f'$.

\vskip 1mm
(2)
 A \emph{subkernel pair} of a morphism $h\colon A\to B$ is a universal parallel pair $r_0$, $r_1\colon R\to A$ with respect to $h\cdot r_0 \leq h\cdot r_1$.

That is, a relation on $A$ such that every pair $v_0$, $v_1\colon V\to A$ with $h\cdot v_0 \leq h\cdot v_1$ factorizes through $r_0$, $r_1$.
\end{defi}
  
In ordinary category theory the concept of congruence is an abstraction of kernel pairs: every  kernel pair is a congruence, and the opposite  implication holds in $\Set$ (and other categories, e.g.\ varieties). In order-enriched categories we introduce subcongruences which are abstractions of subkernel pairs. Each subkernel pair is reflexive (even hyper-reflexive, see below) and transitive. It is, of course, not symmetric.

Recall from Remark \ref{R:eq} that the  reflexivity of $r_0$, $r_1\colon R\to A$ means that of all $s\colon S\to A$ the pair $s$, $s$ factorizes through $r_0$, $r_1$. Here is a stronger property:

\begin{defi}\label{D:hyper}
A relation $r_0$, $r_1\colon R\to A$ in an order-enriched category is
\emph{order-reflexive} if every comparable pair $s_0 \leq s_1 \colon S\to A$ factorizes through $r_0$, $r_1$.
\end{defi}

Every subkernel pair is order-reflexive: from $s_0 \leq s_1$ it follows that $f \cdot s_0 \leq f \cdot s_1$,
thus $s_0 ,s_1$ factorize through the subkernel pair of $f$.

\begin {defi} A \emph{subcongruence} is a hyper-reflexive and transitive relation.
\end{defi}

Thus every subkernel pair is  a subcongruence.
In all varieties we will prove the reverse implication: 
every subcongruence is a subkernel pair.To achieve this, we first show how coinserters of congruences are constructed in $\Pos$.

\begin{constr}\label{C:ref}
For every  subcongruence $r_0, r_1\colon R\to A$ in $\Pos$ the following relation on $A$ is a preorder:
$$
x \sqsubseteq y \mbox{\quad iff \quad} x= r_0(z) \quad \mbox{and}\quad y=r_1(z) \quad \mbox{for some\quad} z\in R.
$$
The posetal reflection $c\colon (A, \sqsubseteq) \to C$ yields the coinserter $c\colon A\to C$ of $r_0$ and $r_1$.
\end{constr}

\begin{proof}
Let $\leq$ denote the given partial order on $A$. We verify that $\sqsubseteq$ is indeed a preorder, and that it contains $\leq$. Thus $c$ is a monotone map from $(A, \leq)$ to $C$. It then easily follows that $c$ is the coinserter of $r_0$ and $r_1$.

(1) The relation $\sqsubseteq$ is reflexive because $r_0$, $r_1$ is a reflexive relation. To prove that $\sqsubseteq$  is a preorder, we verify the transitivity:
\begin{center}
if \ $x\sqsubseteq x' \sqsubseteq x''$ \quad then \quad $x\sqsubseteq x''$.
\end{center}
We are given $z$, $z'\in R$ with
$$
x= r_0(z)\,, \ x'=r_1(z) = r_0(z')\quad \mbox{and}\quad x''=r_1(z')\,.
$$
Let $s, s', s''\colon 1\to A$ be the morphisms representing $x$, $x'$ and $x''$, resp. Then $s$, $s'$ factorizes through $r_0, r_1$: use the morphism $1\to R$ representing  $z$.
Analogously, $s', s''$ factorizes through $r_0, r_1$.
 Since  the relation $r_0, r_1$ is transitive, $s, s''$ also factorize through $r_0, r_1$. The factorizing morphism represents  an element $z''\in R$ such that $x=r_0(z)= r_0(z'')$ and  $x''=r_1(z')= r_1(z'')$. This verifies that $x\sqsubseteq x''$.

(2) We show that
$$
x_0\leq x_1 \quad \mbox{implies}\quad x_0 \sqsubseteq x_1\,.
$$
We have morphisms $q_i \colon 1\to A$ which represent $x_i$ 
($i=0,1$). Then $q_0 \leq q_1$, thus by hyper-reflexivity there exists $k\colon 1\to R$ with  $q_0 = r_0\cdot k$ and $ q_1 = r_1 \cdot k$. In other words, the element $z\in R$ represented by $k$ fulfils $x_0 = r_0(z)$ and $x_1= r_1(z)$; hence $x_0 \sqsubseteq x_1$.

(3) The monotone map $c\colon (A, \leq ) \to C$ is a coinserter of $r_0$ and $r_1$. In fact, $c\cdot r_0 \leq c\cdot r_1$: for $z\in R$ we have  $r_0(z) \sqsubseteq r_1(z)$, thus $c\cdot r_0(z) \leq c \cdot r_1(z)$.

Let $c'\colon A\to C'$ fulfill $c'\cdot r_0 \leq c'\cdot r_1$. To prove that $c'$ factorizes through the posetal reflection $c$ of $(A, \sqsubseteq)$, we just need to verify that
$$
x_0 \sqsubseteq x_1 \quad \mbox{implies}\quad c'(x_0) \leq c'(x_1) \quad \mbox{in}\quad C'\,.
$$
But this follows trivially from $c'\cdot r_0 \leq c'\cdot r_1$.
\end{proof}

\begin{defi}
An order-enriched category has \emph{effective subcongruences} if every subcongruence is the subkernel pair of some morphism.
\end{defi}

\begin{prop}\label{P:subex}
The category of ordered $\Sigma$-algebras has effective subcongruences.
\end{prop}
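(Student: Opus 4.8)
The plan is to show that the category $\Sigma\text{-}\Pos$ has effective subcongruences, i.e.\ that every subcongruence $r_0, r_1\colon R\to A$ is the subkernel pair of some morphism. The natural candidate for that morphism is the coinserter $c\colon A\to C$ of the pair $r_0, r_1$ itself, which exists in $\Sigma\text{-}\Pos$ because the underlying coinserter can be formed in $\Pos$ (via the posetal reflection of Construction \ref{C:ref}) and the algebra structure descends to the quotient since all operations are monotone. So the first step is to verify that $c$ is a genuine homomorphism of ordered $\Sigma$-algebras; this is routine, because the preorder $\sqsubseteq$ built from $r_0, r_1$ is compatible with the operations (as $R$ is a subalgebra of $A\times A$ and the $\Sigma$-operations on $A$ are monotone).

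The heart of the argument is then to show that $r_0, r_1$ is the subkernel pair of this coinserter $c$. By Construction \ref{C:ref}, $c(x)\leq c(y)$ in $C$ holds iff $x$ and $y$ are connected by a finite $\sqsubseteq$-chain, where $x\sqsubseteq y$ means $x=r_0(z)$, $y=r_1(z)$ for some $z\in R$. What must be established is that whenever $c\cdot v_0 \leq c\cdot v_1$ for a pair $v_0, v_1\colon V\to A$, this pair factorizes through $r_0, r_1$ — and that $r_0, r_1$ is collectively order-reflecting. Collective order-reflection is immediate since $r_0, r_1$ is assumed to be a relation. For the universal factorization, I would argue pointwise (using that $V$ is generated by its elements, i.e.\ maps $1\to V$): for each $v\in V$ we have $c(v_0(v))\leq c(v_1(v))$, which unwinds to a finite zig-zag $v_0(v)=a_0 \sqsubseteq a_1 \sqsubseteq \cdots \sqsubseteq a_n = v_1(v)$.

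\textbf{The main obstacle} is precisely collapsing such a chain into a single element of $R$ witnessing $v_0(v)\mathrel{\sqsubseteq} v_1(v)$ directly. Here is where transitivity of the subcongruence does the work: each link $a_k \sqsubseteq a_{k+1}$ is witnessed by some $z_k\in R$, and these are exactly factorizations of the pairs $(a_k, a_{k+1})$ through $r_0, r_1$. Iterating transitivity of $R$ (in the sense of the relational definition, applied to the maps $1\to A$ representing the $a_k$) collapses the whole chain to a single witness $z\in R$ with $r_0(z)=v_0(v)$ and $r_1(z)=v_1(v)$. Assembling these witnesses over all $v\in V$ into a single morphism $V\to R$ uses that $R$ carries the $\Sigma$-algebra structure and that the assignment $v\mapsto z$ is a homomorphism (this follows because $R\hookrightarrow A\times A$ is a subalgebra and both $v_0, v_1$ are homomorphisms, so the chosen witnesses are forced to respect operations once the underlying set map is fixed). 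This produces the required factorization $t\colon V\to R$ with $r_i\cdot t = v_i$, completing the proof that $r_0, r_1$ is the subkernel pair of $c$, and hence that $\Sigma\text{-}\Pos$ has effective subcongruences.
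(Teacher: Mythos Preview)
Your approach is correct and close to the paper's, with two points worth noting.

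First, the ``main obstacle'' you identify is illusory. Construction~\ref{C:ref} already establishes that the one-step relation $x\sqsubseteq y \Leftrightarrow \exists\, z\in R:\ r_0(z)=x,\ r_1(z)=y$ is itself a preorder containing the order of $A$ (this is precisely where transitivity and order-reflexivity of the subcongruence are spent). Hence $c(v_0(v))\leq c(v_1(v))$ in the posetal reflection directly yields a single witness $z\in R$; there is no zig-zag to collapse. Your iterated-transitivity detour is harmless but redundant, and signals that you have not fully absorbed what Construction~\ref{C:ref} buys you.

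Second, the paper structures the passage from $\Pos$ to $\Sigma\mbox{-}\Pos$ more modularly. It first checks that the forgetful functor $U$ preserves subcongruences: transitivity because $U$ creates pullbacks, and order-reflexivity via the free-algebra adjunction (comparable maps $s_0\leq s_1\colon S\to UA$ extend to comparable homomorphisms $s_i^\#\colon T_\Sigma S\to A$, which then factor through $r_0,r_1$ in $\Sigma\mbox{-}\Pos$). Effectiveness in $\Pos$ then makes $Ur_0,Ur_1$ the subkernel pair of $c$, and since $U$ creates limits this lifts automatically to $\Sigma\mbox{-}\Pos$, bypassing your pointwise assembly of $t\colon V\to R$. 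The algebra structure on $C$ is obtained by observing that $c^n\colon A^n\to C^n$ has subkernel pair $r_0^n,r_1^n$, so each operation $\sigma_A$ descends via the coinserter universal property of $c^n$. Your direct route achieves the same thing, but you should explicitly verify that $Ur_0,Ur_1$ is a subcongruence in $\Pos$ before invoking Construction~\ref{C:ref}; order-reflexivity at the $\Pos$ level is not entirely automatic and is where the free-algebra argument above is needed.
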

\begin{proof}
(1) $\Pos$ has effective subcongruences. Indeed, given a subcongruence $r_0, r_1\colon R\to A$ and the morphism $c\colon A\to C$ of the above construction, then $r_0, r_1$ is the subkernel pair of $c$: First, $c\cdot r_0 \leq c \cdot r_1$ clearly holds. Second, every morphism $c'\colon A\to C'$ with $c'\cdot r_0 \leq c'\cdot r_1$ fulfils the implication
$$
x_0 \sqsubseteq x_1 \quad \mbox{implies}\quad c'(x_0) \leq c'(x_1) \quad (x_0, x_1 \in A)\,.
$$
 Thus $c'$ factorizes through the posetal reflection of $\sqsubseteq$, which is $c$. Since, moreover, $c$ is surjective, it has required universal property.
 
 (2) For every $n\in \N$ the morphism $c^n \colon A^n\to C^n$ has the  subkernel pairs $r_0^n, r_1^n$. This follows easily from the above construction.
 
 (3) We are ready to prove that $\Sigma$-$\Pos$ has effective  subcongruences. Let $r_0, r_1\colon R\to A$ be homomorphisms forming a subcongruence in  $\Sigma$-$\Pos$. The forgetful functor $U \colon \Sigma$-$\Pos \to \Pos$ preserves subcongruences. In fact, let $r_0, r_1$ be a subcongruence in $\Sigma$-$Pos$.
 
 a. $Ur_0, Ur_1$ is transitive because $U$ preserves pullbacks (in fact, it creates limits).
 
 b. $Ur_0, Ur_1$ is order-reflexive: let $s_0 \leq s_1 \colon S \to UA$ be given. The corresponding homomorphisms $ s_i ^\sharp \colon F_\Sigma S \to A$ are also comparable, thus they factorize through $r_0, r_1$ in $\Sigma$-Pos. Consequently 
 $s_i = Us_i ^\sharp \cdot \eta_S $ implies that $s_0, s_1$ factorize through $Ur_0, Ur_1$.
 
  By Item (1) $Ur_0, Ur_1$ is the kernel pair  of a (surjective) morphism $c\colon A\to C$. We prove that $C$ carries a unique structure of an algebra making $c$ a homomorphism. In other words, for every $n$-ary operation symbol $\sigma\in \Sigma$ a unique morphism $\sigma_C$ exists making the square below commutative:
$$
\xymatrix@C=2pc@R=2pc{
R^n \ar@{-->}@<0.5ex>[r]^{r_1^n}
\ar@{-->}@<-0.5ex>[r]_{r_0^n}
& A^n \ar[d]_{c^n} \ar[r]^{\sigma_A} & A \ar[d]^{c}\\
& C^n \ar[r]_{\sigma_C} & C
}
$$
Indeed, by (2), $r_0^n$, $r_1^n$ is a subcongruence on $A^n$, and $c^n$ is the coinserter. Since $r_0, r_1$ are homomorphisms, we have $(c\cdot \sigma_A)\cdot r_0 ^n \leq (c\cdot \sigma_A)\cdot r_1 ^n$, indeed:
$$
 c\cdot \sigma_A\cdot r_0 ^n = c\cdot r_0 \cdot\sigma_R \leq c\cdot r_1 \cdot \sigma_R = c\cdot \sigma_A \cdot r_1 ^n\,.
$$
Thus we get the unique $\sigma_C$ as stated.

Moreover, the homomorphism $c$ is the coinserter of $r_0$ and $r_1$ in $\Sigma$-$\Pos$. Indeed, given a homomorphism $c'\colon A\to C'$ with $c'\cdot r_0\leq c'\cdot r_1$, there is a unique monotone map $h$ making the triangle below commutative in $\Pos$:
$$
\xymatrix@C=1pc@R=2pc{
& A\ar[dl]_{c} \ar[dr]^{c'} & \\
C \ar[rr]_{h} && C'
}
$$
Since $c$ and $c'$ are homomorphisms and $c$ is surjective, it follows that $h$ is also a homomorphism. Thus, $c$ is the coinserter of $r_0$ and $r_1$ in $\Sigma$-$\Pos$.
\end{proof}

In the classical universal algebra Birkhoff's Variety Theorem states that a full subcategorry of $\Sigma$-$\Alg$ is a variety iff if is closed under products, subalgebras, and quotients (= homomorphic images). For ordered algebras we have the analogous three constructions:

(1) A product of algebras $A_i$ ($i\in I$) is their cartesian product with both operations and order given coordinate-wise.

(2) By a \emph{subalgebra} of an ordered algebra $A$ is meant a subposet closed under the operations. Thus subalgebras are represented by homomorphisms $m\colon B\to A$ such that for $x$, $y\in B$ we have $x\leq y$ iff $m(x) \leq m(y)$.

(3) By a \emph{homomorphic image} of an algebra we  mean a quotient represented by a subregular epimorphism $e\colon A\to B$. (That is, $e$ is surjective.)

\begin{birk}[\cite{ADV}]
 A full subcategory of $\Sigma$-$\Pos$ is a variety of ordered algebras iff it is closed under products, subalgebras, and homomorphic images.
 \end{birk}
 
 \begin{corollary}\label{C:eff}
 Every variety of ordered algebras has effective subcongruences.
 \end{corollary}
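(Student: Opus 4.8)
The plan is to deduce Corollary \ref{C:eff} from Proposition \ref{P:subex} together with the Birkhoff Variety Theorem just stated. Proposition \ref{P:subex} establishes that the ambient category $\Sigma$-$\Pos$ has effective subcongruences, so the task reduces to transferring this property from $\Sigma$-$\Pos$ to an arbitrary variety $\cv\subseteq\Sigma$-$\Pos$. The key structural fact I would exploit is that, by the Birkhoff Variety Theorem, $\cv$ is closed in $\Sigma$-$\Pos$ under products, subalgebras, and homomorphic images (subregular epimorphisms). Since subkernel pairs and coinserters are built from these three operations, I expect $\cv$ to inherit effective subcongruences essentially by closure.

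First I would take a subcongruence $r_0,r_1\colon R\to A$ in $\cv$. Because $\cv$ is a full subcategory of $\Sigma$-$\Pos$ and the inclusion preserves the relevant structure (it creates limits, so it preserves the order-reflecting property, transitivity via pullbacks, and hyper-reflexivity exactly as the forgetful functor did in the proof of Proposition \ref{P:subex}), the same pair $r_0,r_1$ is a subcongruence in $\Sigma$-$\Pos$. By Proposition \ref{P:subex}, $\Sigma$-$\Pos$ has effective subcongruences, so $r_0,r_1$ is the subkernel pair of some homomorphism; in fact, following Construction \ref{C:ref}, it is the subkernel pair of the coinserter $c\colon A\to C$, which is a surjective homomorphism, i.e.\ a subregular epimorphism. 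Thus $C$ is a homomorphic image of $A$.

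Next I would invoke closure under homomorphic images: since $A\in\cv$ and $c\colon A\to C$ is a subregular epimorphism, the Birkhoff Variety Theorem gives $C\in\cv$. Consequently the coinserter $c$ lives inside $\cv$, and it remains to check that $r_0,r_1$ is still the subkernel pair of $c$ computed in $\cv$ rather than in $\Sigma$-$\Pos$. This holds because the defining universal property of a subkernel pair quantifies over test pairs $v_0,v_1\colon V\to A$ with $c\cdot v_0\leq c\cdot v_1$; as $\cv$ is full, every such pair with $V\in\cv$ is among the pairs already tested in $\Sigma$-$\Pos$, so the factorization through $r_0,r_1$ (which itself lies in $\cv$) persists, and uniqueness is preserved by fullness. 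Hence $r_0,r_1$ is the subkernel pair of $c$ in $\cv$, proving effectiveness.

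The main obstacle I anticipate is the verification that the universal property of the subkernel pair survives the passage to the full subcategory, rather than the existence of $c$ itself. Existence is handed to us once we know $C\in\cv$, which is immediate from Birkhoff closure under homomorphic images; but one must be slightly careful that the coinserter formed in $\Sigma$-$\Pos$ genuinely serves as the coinserter (and its subkernel pair) inside $\cv$. This is exactly where fullness of $\cv$ and the fact that $C\in\cv$ combine: a coinserter in a full subcategory closed under the relevant quotient agrees with the one computed in the ambient category. I would spell this compatibility out explicitly, as it is the only place where the argument is not a direct quotation of earlier results.
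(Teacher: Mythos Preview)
Your proposal is correct and follows essentially the same route as the paper: pass the subcongruence from $\cv$ to $\Sigma$-$\Pos$ via closure under limits, apply Proposition~\ref{P:subex} to realize it as the subkernel pair of a surjective homomorphism, and then use Birkhoff closure under homomorphic images to bring the codomain back into $\cv$. The paper is terser on the final compatibility check (that the subkernel pair computed in $\Sigma$-$\Pos$ remains one in $\cv$ by fullness), which you spell out in more detail, but the overall strategy is identical.
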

 
 Indeed, given a variety $\cv \subseteq \Sigma$-$\Pos$, since it is closed under pullbacks (in fact, under limits), every subcongruence $r_0, r_1\colon R\to A$ in $\cv$ is also a subcongruence  in    $\Sigma$-$\Pos$. By Proposition~\ref{P:subex} there is a homomorphism  $h\colon A\to B$ with subkernel pair $r_0, r_1$. From the proof of that proposition we know  that $h$ is surjective. Hence $B$ is a homomorphic image of $A\in \cv$. Consequently, $B\in \cv$ and $h$ is a morphism of $\cv$ with the subkernel pair $r_0, r_1$.

\begin{defi}[\cite{ARO}]
An object $G$ of an  order-enriched category is a \emph{subregular projective}  if it is hom-functor to $\Pos$ preserves subregular epimorphisms. That is, given a subregular epimorphism $e\colon A\to B$, every morphism from $G$ to $B$ factorizes through it:
$$
\xymatrix@C=2pc@R=2pc{
& G \ar@{-->}[dl]_{\exists} \ar[d]^{\forall}\\
A\ar[r]_{e} & B
}
$$
\end{defi}

\begin{example}[{\cite[Ex. 4.6]{ARO}}]\label{E:nec}
The free algebra $G$ on one generator in a variety $\cv$ is an abstractly finite subregular projective.

Finally, $G$ is a \emph{strong generator} in the enriched sense (Kelly \cite{K}): the hom-functor $\cv (G,-)\colon \cv \to \Pos$ reflects isomorphisms. Indeed, given a homomorphism $h\colon A\to B$ such that the morphism
$$
h\cdot(-)\colon \cv (G, A) \to \cv(G,B)
$$
is invertible in $\Pos$, it follows that $h$ is a bijection which preserves and reflects the order. Consequently, $h^{-1} \colon B\to A$ is a monotone  homomorphism.
\end{example}

\begin{lemma} \label{L:ten}
In a category with reflexive coinserters every object $G$ with copowers has tensors.
\end{lemma}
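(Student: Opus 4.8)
The plan is to build the tensor $P\otimes G$ of a poset $P$ with $G$ explicitly as a reflexive coinserter of copowers of $G$, and then to verify the defining adjunction isomorphism $\ck(P\otimes G, X)\cong [P,\ck(G,X)]$ by applying the contravariant hom-functor and exploiting that it carries coinserters to inserters. Since the tensor is a pointwise left adjoint, establishing such a natural representing object for each $P$ is exactly what is needed.

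First I would present every poset $P$ as a reflexive coinserter of discrete posets. Writing $|P|$ for the underlying set (a discrete poset) and $R_P=\{(x,y):x\leq y\text{ in }P\}$ for the graph of the order (also discrete), the two projections $p_0,p_1\colon R_P\to |P|$ form a reflexive pair, jointly split by $d\colon x\mapsto (x,x)$. A direct check against Definition \ref{D:coins} shows that the identity map $|P|\to P$ is the coinserter of $p_0,p_1$ in $\Pos$: clause (1) says precisely that a map out of $|P|$ which respects the pairs in $R_P$ is monotone on $P$, and clause (2) is pointwise comparison. I would then \emph{define} $P\otimes G$ as the coinserter in $\ck$ of the induced pair
$$
\pi_0,\pi_1\colon R_P\cdot G \rightrightarrows |P|\cdot G, \qquad \pi_i = p_i\cdot G,
$$
which is again reflexive (split by $d\cdot G$, since $(p_i\cdot G)(d\cdot G)=(p_i d)\cdot G=\id$) and hence exists because $\ck$ has reflexive coinserters and $G$ has the required copowers. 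Denote the coinserter map by $c\colon |P|\cdot G\to P\otimes G$.

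The heart of the argument is then to compute $\ck(P\otimes G,X)$. The enriched universal property of the coinserter translates directly, under $\ck(-,X)$, into the assertion that $\ck(P\otimes G,X)$ is the \emph{inserter} of
$$
\ck(\pi_0,X),\ \ck(\pi_1,X)\colon \ck(|P|\cdot G,X)\rightrightarrows \ck(R_P\cdot G,X).
$$
Indeed, precomposition $\bar g\mapsto \bar g\cdot c$ lands in $\{g: g\pi_0\leq g\pi_1\}$ since $c\pi_0\leq c\pi_1$; it is surjective onto this inserter by clause (1) of Definition \ref{D:coins}, and order-reflecting by clause (2); hence it is an order-isomorphism. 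Using that copowers satisfy $\ck(M\cdot G,X)\cong \ck(G,X)^M$ in $\Pos$, the two maps become restriction along $p_0,p_1$ between powers, and their inserter is
$$
\{\,t\colon |P|\to\ck(G,X)\ :\ x\leq_P y\ \Rightarrow\ t(x)\leq t(y)\,\}=[P,\ck(G,X)],
$$
with the pointwise order. This isomorphism is natural in $X$, so $P\otimes G$ represents $X\mapsto[P,\ck(G,X)]$; letting $P$ vary assembles $(-)\otimes G$ into a left adjoint of $\ck(G,-)$, i.e.\ $G$ has tensors.

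The step I expect to require the most care is the translation of the coinserter's two-dimensional universal property into the inserter description of $\ck(P\otimes G,X)$: one must check not merely that precomposition with $c$ is bijective onto the inserter, but that it is an order-\emph{isomorphism}, and it is exactly clause (2) (order-reflection) of Definition \ref{D:coins} that supplies this. The remaining identifications — copowers as $M$-fold powers in $\Pos$, and the inserter of the restriction maps as the poset of monotone maps $[P,\ck(G,X)]$ — are routine enriched bookkeeping.
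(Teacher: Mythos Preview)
Your proof is correct and follows essentially the same approach as the paper: both construct $P\otimes G$ as the reflexive coinserter of the pair $R_P\cdot G\rightrightarrows |P|\cdot G$ induced by the order-relation projections, and then verify the adjunction $\ck(P\otimes G,X)\cong [P,\ck(G,X)]$. The only difference is cosmetic: the paper writes down explicit mutually inverse monotone maps $i$ and $j$, whereas you package the same verification as ``$\ck(-,X)$ takes the coinserter to an inserter, which computes to $[P,\ck(G,X)]$''; the content of clauses (1) and (2) of Definition~\ref{D:coins} is used in exactly the same way in both arguments.
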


\begin{proof}
We describe, for every poset $P$, the tensor $C= P\otimes G$ as the following reflexive coinserter:
$$
\xymatrix@C=2pc{
R\cdot G \ar@<0.5ex>[r]^{\bar r_1}
\ar@<-0.5ex>[r]_{\bar r_0} &|P| \cdot G \ar[r]^{c} & C
}
$$
Here $|P|$ is the underlying set of $P$ and $R\subseteq |P| \times |P|$ is its order relation. The morphisms $\bar r_i$ are  induced by the projection $r_i \colon R\to |P|$ given by $r_i (x_0, x_1) = x_i$. The diagonal $\Delta \colon |P| \to R$ yields a joint splitting $\Delta \cdot G$ of the pair $\bar r_i=r_i\cdot G$, thus, the coinserter exists. Its components are denoted by $c_x \colon G \to C$ for $ x \in P$.

Our task is to find a natural isomorphism
$$
\xymatrix@C=2pc@R=.21pc{
&C\ar[r]^{f} &X&\\
\ar@{-}[rrr]&&&\\
&P\ar[r]_<<<<<{i(f)} &\ck(G,X)&
}
$$
Given $f$, define $i(f)$ in $x\in P$ as $f\cdot c_x$. This map $i(f)$ is monotone since $|P|\cdot G$ is a conical coproduct. The resulting map $i$ is also monotone: $f \leq f'$ implies $f\cdot c_x\leq f'\cdot c_x$ thus $i(f) \leq i(f')$.

Conversely, given $g\colon P\to \ck(G,X)$ in $\Pos$, the morphism $\bar g\colon |P| \cdot G\to X$ given by $\bar g = [g(x)]_{x\in |P|}$ fulfils $\bar g\cdot \bar r_0 \leq \bar g\cdot \bar r_1$ because each pair $x_0 \leq x_1$ in $R$ yields $g(x_0) \leq g(x_1)$ in $\ck(G, X)$.
 Let $j(g) \colon C\to X$ be the unique morphism with
 $$
 \bar g = j(g) \cdot c\,.
 $$
 This defines a monotone map  $j$: if $g\leq h\colon P\to \ck(G,X)$, then $\bar g \leq \bar h$, thus $j(g)\leq j(h)$ by the universal property of $c$.

It is easy to see that $i$ and $j$ are inverse  to each other. And $i$ is natural: given $u\colon X\to X'$, then $i(u\cdot f)$ assigns to $x$ the value $u\cdot f\cdot c_x$, which is what $u\cdot i(f)$ does, too.
 \end{proof} 

Recall that effective objects (in ordinary categories) are those with hom-functor preserving coequalizers of congruences. Here is the enriched variant:

\begin{defi}[{\cite{ARO}}]
An object $G$ of an order-enriched category is \textit{subeffective} if its hom-functor to $\Pos$ preserves coinserters of subcongruences.
\end{defi}
 \begin{example}\label{E:eff} The free algebra $G$ on one generator in a variety $\cv$ of ordered algebras is subeffective. Indeed, its hom-functor is naturally isomorphic to the forgetful functor $U\colon\cv \to \Pos$. It follows from the description of coinserters of subequivalences in the proof of Proposition~\ref{P:subex} that $U$ preserves these coinserters.
 \end{example}
 
 The following proposition has a completely analogous proof to that of Proposition~\ref{P:eff}:
 
 \begin{prop}\label{P:gener} In a category $\ck$ with subkernel  pairs and their coinserters 
 let $G$ be a subregularly projective strong  generator. Then we have the following equivalence:
 \begin{center}
 $G$ subeffective $\Leftrightarrow$ $\ck$ has effective subcongruences.
 \end{center}
 \end{prop}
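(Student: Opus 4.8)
The plan is to transcribe the proof of Proposition~\ref{P:eff} into the order-enriched setting, replacing $\Set$ by $\Pos$, coequalizers by coinserters (Definition~\ref{D:coins}), kernel pairs by subkernel pairs, congruences by subcongruences, and the qualifier ``regular'' by ``subregular'' throughout. Two standing facts make the translation run. First, the enriched hom-functor $U=\ck(G,-)\colon\ck\to\Pos$ is representable and hence preserves every weighted limit present in $\ck$; concretely it preserves subkernel pairs, since the universal property of a subkernel pair $r_0,r_1\colon R\to A$ of $h$ passes verbatim to $Ur_0,Ur_1$ in $\Pos$ (a pair $p_0,p_1\colon S\to UA$ with $Uh\cdot p_0\le Uh\cdot p_1$ is, on points, a family $h\cdot p_0\le h\cdot p_1$ factoring uniquely through $r_0,r_1$, and collective order-reflection is inherited from faithfulness). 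Second, since $G$ is a subregular projective, $U$ carries subregular epimorphisms to subregular epimorphisms, i.e.\ to surjective monotone maps (subregular epis in $\Pos$ being exactly the surjections). The $\Pos$-coinserters that occur are described by Construction~\ref{C:ref}, and $\Pos$ has effective subcongruences by Proposition~\ref{P:subex}(1); these two are the order-enriched replacements for ``$c\cdot(-)$ is a surjection'' and ``a surjection is the coequalizer of its kernel pair''.

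For the implication from effective subcongruences to subeffectivity of $G$, I would argue as in part (1) of Proposition~\ref{P:eff}. By effectivity $r_0,r_1$ is the subkernel pair of some morphism, whence, by a routine comparison of universal properties, of its own coinserter $c\colon A\to C$ as well; being order-reflexive it is reflexive, so $c$ is a subregular epimorphism. Subregular projectivity then gives that $Uc$ is surjective. One checks that the subkernel pair of $Uc$ in $\Pos$ is precisely the image of $(Ur_0,Ur_1)$: any pair $p_0,p_1$ with $c\cdot p_0\le c\cdot p_1$ factors uniquely through $r_0,r_1$ by the universal property of the subkernel pair of $c$. Since in $\Pos$ a surjection is the coinserter of its own subkernel pair (Construction~\ref{C:ref}, Proposition~\ref{P:subex}(1)), it follows that $Uc$ is the coinserter of $Ur_0,Ur_1$, i.e.\ $U$ preserves this coinserter.

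For the converse I would follow part (2) of Proposition~\ref{P:eff}. Given a subcongruence $r_0,r_1\colon R\to A$ with coinserter $c\colon A\to C$, subeffectivity yields that $Uc$ is the coinserter of $Ur_0,Ur_1$. The pair $Ur_0,Ur_1$ is itself a subcongruence in $\Pos$: it is collectively order-reflecting because $U$ is faithful and locally monotone, and it is order-reflexive and transitive by a direct check as in part (c1) of the proof of Theorem~\ref{T:main}. By Proposition~\ref{P:subex}(1), $Ur_0,Ur_1$ is therefore the subkernel pair of $Uc$. To lift this to $\ck$, form the subkernel pair $\rho_0,\rho_1\colon P\to A$ of $c$ (it exists by hypothesis); the inequality $c\cdot r_0\le c\cdot r_1$ induces a comparison $R\to P$. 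Applying $U$ and using that $U$ preserves subkernel pairs, both $U\rho_i$ and $Ur_i$ represent the subkernel pair of $Uc$, so this comparison becomes an isomorphism in $\Pos$. As $G$ is a strong generator, $U$ is conservative, so $R\to P$ is an isomorphism and $r_0,r_1$ is the subkernel pair of $c$; hence $\ck$ has effective subcongruences.

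The main obstacle is order-theoretic bookkeeping rather than a new idea: every place where the classical argument manipulated equalities and equivalence relations must be redone with inequalities and the two-step coinserter of Construction~\ref{C:ref} (first the least preorder, then its posetal reflection). Concretely, the delicate points are (a) verifying that $U=\ck(G,-)$ preserves and reflects the full subcongruence structure --- collective order-reflection in place of joint monicity, order-reflexivity in place of reflexivity, and transitivity --- and (b) checking that the surjection $Uc$ genuinely realizes the coinserter, i.e.\ that the preorder it induces on $UA$ coincides with the least preorder generated by $(Ur_0,Ur_1)$; this last point is exactly where effectivity in $\Pos$ (Proposition~\ref{P:subex}(1)) and conservativity of the strong generator are needed.
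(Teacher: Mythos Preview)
Your proposal is correct and follows exactly the approach the paper intends: the paper's ``proof'' of Proposition~\ref{P:gener} is literally the single sentence ``completely analogous to that of Proposition~\ref{P:eff}'', and you have carried out precisely that translation, replacing kernel pairs by subkernel pairs, coequalizers by coinserters, and $\Set$ by $\Pos$. Your treatment is in fact more explicit than the paper's own proof of Proposition~\ref{P:eff}: in the direction $(\Rightarrow)$ you form the actual subkernel pair $\rho_0,\rho_1$ of $c$, build the comparison $R\to P$, and invoke conservativity of the strong generator, whereas the paper's argument for Proposition~\ref{P:eff}(2) only checks the kernel-pair property against test maps from $G$ and leaves the passage to the genuine universal property implicit.
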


 \begin{defi}\label{D:subex}
 An order-enriched category is \emph{subexact} if it has 

(1) Subkernel pairs and reflexive coinserters.

(2) Effective subcongruences.
\end{defi}
We have not included stability of subregular epimorphisms under pushouts. The reason is the following lemma

\begin{lem}\label{3.24}
Let $G$ be a subregularly projective strong generator $G$ with copowers in $\ck$. If $\ck$ has subkernel pairs and their  coinserters, then subregular epimorphisms are stable under pullback.
\end{lem}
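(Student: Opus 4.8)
The plan is to mimic the proof of Lemma~\ref{L:stable}(2), replacing regular epimorphisms by subregular ones and factorizations through epis by factorizations through subregular epis. The key is to reduce the pullback-stability claim to a statement about morphisms out of $G$, using that $G$ is a strong generator in the enriched sense together with subregular projectivity. So suppose we are given a pullback of a subregular epimorphism $e\colon A\to Q$ along some $f\colon B\to Q$, with pullback object $P$ and projections $f'\colon P\to A$, $e'\colon P\to B$, and we must show $e'$ is subregular.

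First I would establish the analogue of the first half of Lemma~\ref{L:stable}(2): every morphism $g\colon G\to B$ factorizes through $e'$. Indeed, since $G$ is a subregular projective and $e$ is subregular, the composite $f\cdot g\colon G\to Q$ factorizes through $e$, say via $h\colon G\to A$ with $e\cdot h = f\cdot g$. Then the universal property of the pullback produces the desired factorization of $g$ through $e'$. Here I must be careful that the pullback $P$ exists and has the expected universal property in the order-enriched sense; I would invoke that $\ck$ has subkernel pairs and reflexive coinserters (and hence, by the ambient hypotheses carried from Lemma~\ref{L:stable} and the surrounding development, enough limits) to guarantee that the relevant pullback is available.

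Next I would form a subregular factorization of $e'$. Using that $\ck$ has subkernel pairs and their coinserters, one factorizes $e'$ as $e' = m\cdot c$ where $c$ is the coinserter of the subkernel pair of $e'$ and $m$ is the induced comparison morphism, exactly as in the ordered analogue of Lemma~\ref{L:stable}(1). The morphism $c$ is subregular by construction (a coinserter of a reflexive pair, the subkernel pair being order-reflexive hence reflexive). It then remains to show $m$ is an isomorphism: since every $g\colon G\to B$ factorizes through $e'$ and hence through $m$, and since $G$ is a strong generator (its hom-functor reflects isomorphisms), I would argue that $m$ is invertible. Concretely, $m$ being such that all morphisms from $G$ factorize through it, together with $m$ being order-reflecting on hom-objects out of $G$, forces $\ck(G,m)$ to be an isomorphism in $\Pos$, and then strong-generation upgrades this to $m$ being an isomorphism in $\ck$.

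The main obstacle I expect is the passage from \emph{every} $g\colon G\to B$ factorizing through $m$ to $m$ being invertible in the enriched setting. In the ordinary case (Lemma~\ref{L:stable}) one only needs $m$ monic plus strong-generation; here the enriched strong generator reflects isomorphisms rather than merely detecting that a mono is iso, so I must verify that $\ck(G,m)$ is not just surjective-on-points but an order-isomorphism in $\Pos$. This requires checking that $m$ is order-reflecting on the hom-posets $\ck(G,-)$, which should follow from $m$ being the comparison morphism of a subkernel-pair factorization (so that $m$ is order-monic in the appropriate sense), but it is precisely this order-theoretic refinement that distinguishes the argument from the classical one and where the care must be taken.
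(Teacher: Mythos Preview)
Your proposal is correct and follows essentially the same route as the paper: factorize $e'$ as a subregular epimorphism followed by an order-embedding $m$ (using the subkernel pair of $e'$ and its coinserter), observe via subregular projectivity of $G$ and the pullback property that every $g\colon G\to B$ factorizes through $m$, and conclude that $m$ is invertible since $G$ is a strong generator. The ``main obstacle'' you flag---that $m$ must be shown to be order-reflecting on homs from $G$, not merely monic---is exactly the point, and your suggested resolution (that this comes from $m$ arising as the comparison map out of a subkernel-pair coinserter) is the same as the paper's: the detailed verification appears in part~(a) of the proof of the main theorem, where one lifts $u_0,u_1\colon G\to C$ with $m\cdot u_0\leq m\cdot u_1$ along $c$ to $v_0,v_1$, uses the subkernel pair to find $v$ with $v_i=r_i\cdot v$, and concludes $u_0=c\cdot r_0\cdot v\leq c\cdot r_1\cdot v=u_1$.
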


\begin{proof}
(1) The generator $G$ is proregular: all canonical morphisms $[h] \colon \coprod\limits_{h\colon G\to X} G\to X$ are proregular epimorphisms.  This is proved precisely as Lemma \ref{L:reg}. The only modification is that for the pair $u_0, u_1$ there we assume $u_0\leq u_1$.

(2) $\ck$ has factorization of morphisms as a subregular epimorphism followed by an order-embedding, this is analogous to (1) in the proof of Lemma \ref{L:stable}. The rest is as (2) in that proof.
\end{proof}
Recall subvarietal generators (Def. \ref{de1.4}).

\begin{prop}\label{P:co}
If a category has subkernel pairs, reflexive coinserters and a subvarietal generator, then it is complete and cocomplete.
\end{prop}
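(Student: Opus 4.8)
The plan is to reduce the statement to Corollary \ref{C:refl} applied to the base category $\cv=\Pos$, which is both complete and cocomplete. I take $\ca$ to be the (small) full subcategory of all finite copowers of $G$; Corollary \ref{C:refl} then yields that $\ck$ is equivalent to a reflective full subcategory of $[\ca^{\op},\Pos]$ and hence, like that complete and cocomplete functor category, is itself complete and cocomplete. Thus the whole task is to verify the three inputs of that corollary: (i) $G$ has tensors; (ii) the finite copowers of $G$ form a dense full subcategory; (iii) $\ck$ has the reflexive coequalizers used in building the reflection.

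Input (i) is immediate from Lemma \ref{L:ten}, since $G$ has copowers and $\ck$ has reflexive coinserters. For (iii) I would first argue that $\ck$ has all coproducts. By the preceding lemma each canonical morphism $[h]\colon \ck(G,X)\cdot G\to X$ is a subregular epimorphism, that is, a coinserter of a reflexive pair, and since $G$ is a generator the subkernel pair $R\to \ck(G,X)\cdot G$ of $[h]$ is itself covered by a copower of $G$; hence every object is a reflexive coinserter of copowers of $G$. Coproducts of copowers are again copowers, and since colimits commute with colimits, forming the reflexive coinserter of the coproduct of these presentations produces arbitrary coproducts. With coproducts available, a reflexive coequalizer of a reflexive pair $f_0,f_1\colon X\to Y$ with joint splitting $d$ is obtained by a doubling trick: the pair $[f_0,f_1],[f_1,f_0]\colon X+X\to Y$ is again reflexive (joint splitting $\mathrm{inl}\cdot d$), and its reflexive coinserter $c$ forces simultaneously $c f_0\le c f_1$ and $c f_1\le c f_0$, whence $c f_0=c f_1$; one checks directly, using that coinserters are order-epic, that $c$ has the universal property of the conical coequalizer.

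The main obstacle is input (ii), density. Here I would exhibit, for each object $X$, a canonical presentation as a reflexive coinserter of finite copowers of $G$. Starting from the subregular epimorphism $[h]\colon \ck(G,X)\cdot G\to X$ and its reflexive subkernel pair, itself covered by a copower of $G$, one presents $X$ as the coinserter of a reflexive pair between copowers of $G$ assembled entirely from canonical morphisms. Subregular projectivity of $G$ is exactly what makes $[h]$ a subregular epimorphism (the preceding lemma); the enriched strong-generator property guarantees that this presentation detects both morphisms and their order, so that the nerve functor $E\colon\ck\to[\ca^{\op},\Pos]$ is fully faithful; and abstract finiteness lets me cut the possibly infinite copowers down to finite ones, since every morphism from $G$ into a copower factors through a finite subcopower, so the canonical diagram over $X$ restricts to finite copowers without changing the colimit. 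Turning these three features into the precise statement that $E$ is fully faithful, equivalently that each $X$ is the canonical colimit of the finite copowers of $G$ above it, is the technical heart and the step I expect to require the most care.

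Once (i)--(iii) are in place, Corollary \ref{C:refl} with $\cv=\Pos$ gives at one stroke that $\ck$ is complete and cocomplete.
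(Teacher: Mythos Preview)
Your strategy is exactly the paper's: reduce to Corollary~\ref{C:refl} with $\ca$ the full subcategory of finite copowers of $G$, obtaining tensors from Lemma~\ref{L:ten}, establishing density, and deriving reflexive coequalizers from reflexive coinserters via the doubling trick $[f_0,f_1],[f_1,f_0]\colon X+X\to Y$.

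Two remarks. For density the paper does not argue from first principles but simply cites \cite[Theorem~3.23]{ARO}; your sketch points in the right direction, but as you acknowledge it is where the real work lies and would not fit into this proposition.

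Your step (iii) takes an unnecessary detour. You first try to build arbitrary coproducts so that $X+X$ is available for the doubling trick in full generality. But the only reflexive coequalizer actually used in the proof of Proposition~\ref{P:refl} is that of the specific pair $[p_f],[q_f]\colon Y\to X$, and in the present situation $Y$ is a coproduct of tensors of finite copowers of $G$, hence itself a tensor of $G$; thus $Y+Y$ exists automatically as another tensor of $G$, and the doubling trick applies directly without any preliminary construction of general coproducts. Your coproduct argument can in fact be made rigorous (the lifted pair between copowers is reflexive because copowers of a subregular projective are again subregularly projective, and the coproduct universal property of the resulting coinserter can be verified by hand), but the slogan ``colimits commute with colimits'' does not by itself manufacture a colimit not yet known to exist; as written that step is incomplete.
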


\begin{proof}
Let $G$  be a subvarietal generator in a category $\ck$ as bove. We form a small full subcategory $\ca$ of $\ck$ containing $G$ and closed under finite coproducts (which  exist since $\coprod\limits_{n}G$ is the tensor $(n\cdot I) \otimes G$). This subcategory is dense: see \cite[Theorem 3.23]{ARO}. Moreover, $\ck$ has reflexive coequalizers of reflexive pairs $p,q\colon X\to Y$ in $\ca$: the following pair 
$$
\xymatrix@=3pc{
X+X \ar@<.5ex>[r]^{[p,q]}
\ar@<-.5ex>[r]_{[q,p]}
& Y
}
$$
is reflexive, and its coinserter is the coequalizer of $p$ and $q$.

By Corollary \ref{C:refl}, $\ck$ is complete and cocomplete because $\Pos$ is.
\end{proof}

 The following theorem slightly improves and corrects Theorem 4.8 of \cite{ARO}.

 \begin{theorem} An order-enriched category is equivalent to a variety iff it is subexact and has a subvarietal generator.
  \end{theorem}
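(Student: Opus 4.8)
The plan is to mirror the proof of Theorem \ref{T:main} essentially line by line, performing the systematic translation: congruences become subcongruences, regular epimorphisms become subregular epimorphisms, coequalizers become (reflexive) coinserters, and the base $\Set$ becomes $\Pos$. The role played there by Linton's Theorem \ref{T:L} is now played by its order-enriched analogue, which should characterize a $\Pos$-enriched functor $U\colon\ck\to\Pos$ as $\Pos$-monadic by the four conditions: (a) $U$ is a right adjoint; (b) $\ck$ has subkernel pairs and coinserters of subcongruences; (c) $U$ preserves and reflects subcongruences; (d) $U$ preserves and reflects subregular epimorphisms. I would state and use this enriched Linton theorem (in the spirit of \cite{ARO}) as the engine of the sufficiency direction.

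For \emph{necessity}, every variety $\cv$ of ordered algebras is subexact: it has subkernel pairs and reflexive coinserters, and by Corollary \ref{C:eff} it has effective subcongruences. Its free algebra $G$ on one generator is a subvarietal generator by Example \ref{E:nec} (an abstractly finite subregular projective strong generator), and it has copowers, since these are again free algebras of $\cv$.

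For \emph{sufficiency}, let $\ck$ be subexact with subvarietal generator $G$. By Proposition \ref{P:co} the category $\ck$ is complete and cocomplete, and by Lemma \ref{L:ten} the enriched hom-functor $U=\ck(G,-)\colon\ck\to\Pos$ has a left adjoint $F=(-)\otimes G$; this gives condition (a), while subexactness gives (b). For (c) and (d) I would transcribe steps (c1)--(c2) and (d1)--(d2) of Theorem \ref{T:main}. Preservation of subcongruences follows because $U$ is order-faithful, so the image pair is collectively order-reflecting, and both hyper-reflexivity and transitivity transfer to the induced relation on $\ck(G,R)$; reflection of subcongruences uses that $G$ is a strong generator together with the subregular analogue of Proposition \ref{L:G} (which I would prove exactly as there, replacing the reflexivity witness by a hyper-reflexivity witness). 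Preservation of subregular epimorphisms is immediate since $G$ is a subregular projective, and reflection is the strong-generator argument of (d2), with $G$ proregular as established inside Lemma \ref{3.24}.

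Thus $U$ is $\Pos$-monadic, so $\ck\simeq\Pos^{\T}$ for the induced $\Pos$-monad $\T=UF$. It remains to check that $\T$ is finitary and that a finitary $\Pos$-monad yields a variety of ordered algebras. Finitariness follows as in step (ii) of Theorem \ref{T:main}: $G$ is finitely generated (the order-enriched analogue of Lemma \ref{L:fg}, since it is an abstractly finite subvarietal generator), $F$ preserves directed colimits of the relevant nonempty split monomorphisms, hence so does $T=\ck(G,-)\cdot F$, and the criterion of \cite{AMSW} applies. The final identification of $\Pos^{\T}$ with an inequationally presented variety invokes the Birkhoff Variety Theorem for ordered algebras, via the presentation of a finitary $\Pos$-monad by operations and inequations. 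I expect the main obstacle to be precisely this package: pinning down the order-enriched Linton theorem and, within it, the reflection of subcongruences, where care is needed that it is \emph{hyper-reflexivity} rather than mere reflexivity that transfers along $U$ and is reflected by the strong generator $G$; together with the concluding step that the Eilenberg--Moore category of a finitary $\Pos$-monad is a variety of ordered algebras.
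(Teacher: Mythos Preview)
Your proposal takes a genuinely different route from the paper. You mirror Theorem~\ref{T:main} by invoking an order-enriched Linton theorem to obtain $\Pos$-monadicity of $U=\ck(G,-)$, then argue that the resulting monad is finitary and hence yields a variety. The paper, by contrast, does \emph{not} pass through enriched monadicity at all. Instead it builds an explicit full embedding $E\colon\ck\to\Sigma\text{-}\Pos$ for the signature $\Sigma_n=\ck(G,n\cdot G)$ and then applies the Birkhoff Variety Theorem directly to the image $\bar\ck$, verifying closure under products, subalgebras, and homomorphic images. The preliminary steps overlap (factorization via subkernel pair and coinserter, $G$ subregular, (co)completeness via Proposition~\ref{P:co}), but the core engine differs.

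What each approach buys: the paper's Birkhoff route is self-contained within the machinery already developed in the paper and in \cite{ARO}; in particular it avoids stating and proving an enriched Linton theorem and the correspondence ``finitary $\Pos$-monad $\Leftrightarrow$ variety of ordered algebras'', both of which you yourself flag as the main obstacles. Your route is conceptually clean and parallel to the classical case, but it outsources the hard work to precisely those two results, neither of which is supplied by the paper. Moreover, the paper is explicitly \emph{correcting} gaps in \cite{ARO} (the factorization argument and the subalgebra-closure argument), so appealing to results ``in the spirit of \cite{ARO}'' is risky unless you supply independent proofs. Note also that your finitariness step cites \cite{AMSW}, which treats $\Set$-functors; the transfer to $\Pos$-enriched finitariness is not automatic and would need justification. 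In short: your strategy is plausible but incomplete as stated, and the paper's concrete Birkhoff argument sidesteps exactly the packages you identify as obstacles.
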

 
 \begin{proof}
 Every variety satisfies the above conditions by Examples \ref{E:nec} and \ref{E:eff}.
 
 Let $\ck$ be  a category with a generator $G$ as above. We first verify some properties of $\ck$.
 
 (a) $\ck$ has  factorizations  of morphisms as a subregular epimorphism followed by an embedding. Here
  $m\colon A\to B$ is an embedding if $m \cdot u_0\leq m \cdot u_1$ implies $u_0 \leq u_1$ for all $u_0$, $u_1\colon U\to A$. (The proof presented in \cite{ARO} is incomplete.)
 
 Given a morphism $f\colon A\to B$, form the subkernel pair $r_0$, $r_1\colon R\to A$ of $f$. This is a reflexive pair, thus, a coinserter $c\colon A\to C$ exists. For  the unique morphism $m$ with $f=m\cdot c$
 $$
\xymatrix@C=2pc@R=2pc{
R \ar@<0.5ex>[dr]^{r_1}
\ar@<-0.5ex>[dr]_{r_0} && &\\
& A \ar[rr]^{f} \ar[dr]_{c} && B\\
G \ar@{-->}[uu]^{v} \ar@{-->}@<0.5ex>[ur]^{v_1}
\ar@{-->}@<-0.5ex>[ur]_{v_0}
\ar@<0.5ex>[rr]^{u_1}
\ar@<-0.5ex>[rr]_{u_0}&& C \ar[ur]_{m}&
}
$$
we prove that it is an embedding. Since $G$ is a strong generator, this is equivalent to proving for all $u_0$, $u_1\colon G\to C$ that
$$
m\cdot u_0 \leq m\cdot u_1 \quad \mbox{implies} \quad u_0\leq u_1\,.
$$
Since $G$ is a subregular projective, there exist $v_i\colon G\to A$ with $u_i= c \cdot v_i$. Then $f\cdot v_0 = m\cdot u_0 \leq m\cdot u_1 = f\cdot v_1$
 implies that we have $v\colon G\to R$ with $v_i=r_i\cdot v$. This proves the desired inequality:
 $$
 u_0 = c\cdot r_0 \cdot v\leq c \cdot r_1 \cdot v= u_1\,.
 $$

 (b) The generator $G$ is subregular, i.e., the canonical morphism $[h]$ below is a subregular epimorphism (for every object $X$):
 $$
\xymatrix@C=2pc@R=2pc{
\coprod\limits_{h\colon G\to X} G \ar[rr]^{[h]} \ar[dr]_{c} && X\\
& C \ar[ur]_{m}&
}
$$

Indeed, let $[h]= m\cdot c$ be a factorization as in (a). Since $G$ is a strong generator, and every morphism from it to $X$ factorizes through $[h]$, thus also through  the embedding $m$, it follows that $m$ is invertible.
Thus $[h]$ is a subregular epimorphism.

(c) $\ck$ is complete and cocomplete, see Corollary 3.5, using that $G$ has tensors (Lemma \ref{L:ten}).

(d)  In \cite {ARO} the following signature $\Sigma$ is used: its $n$-ary operations are the morphisms from $G$ to $n\cdot G$:
$$
\Sigma_n = \ck (G, n\cdot G) \qquad (n\in \mathbb N)\,.
$$
We obtain (as proved  in Item(2a) of Thm.~4.8 in loc.~cit.) a full embedding
$$
E\colon \ck \to \Sigma\mbox{-}\Pos
$$
as follows. The algebra $EK$ has the underlying poset $\ck(G,K)$. Given an $n$-ary operation $\sigma\colon G\to n\cdot G$, to every $n$-tuple $f_i \colon G\to K$ the map $\sigma_{EK}$ assigns the following composite
$$
\sigma_{EK} (f_i) \equiv G\xrightarrow{\quad\sigma\quad} n\cdot G \xrightarrow{\quad [f_i]\quad } K\,.
$$
To a morphism $h\colon K\to L$ the functor $E$ assigns the homomorphism
$$
Eh = h\cdot (-) \colon \ck (G,K) \to \ck (G,L)\,.
$$
Let $\bar \ck$ be the closure of $E[\ck]$ under insomorphism in $\Sigma$-$\Pos$. Then $\ck \simeq \bar \ck$, and we use the Birkhoff Variety Theorem to verify that $\bar \ck$ is a variety, thus finishing our proof.

(i) $\bar \ck$ is closed under products because $\ck$ has products by (c), and $E$ clearly preserves limits.

(ii) $\bar \ck$ is closed under subalgebras. The proof presented in \cite{ARO} is incomplete, we present a proof now. A subalgebra of $EK$, for $K\in K$, is a subposet $M\subseteq \ck(G,K)$ closed under the operations. That is, given an $n$-ary symbol $\sigma$, we have
$$
[f_i] \cdot \sigma \in M\quad \mbox{for all}\quad f_0, \dots , f_{n-1} \in M\,.
$$
We are to find an object $L\in\ck$ with $EL\simeq M$.

The canonicalmorphism $[h] \colon \coprod\limits_{h\in M} G \to K$ has  a factorization as a subregular epimorphism $c$ followed by an embedding $m$:
$$
\xymatrix@C=2pc@R=2pc{
\coprod\limits_{h\in M} G \ar[rr]^{[h]} \ar[dr]_{c} && K\\
& C \ar[ur]_{m}&
}
$$
We prove that the ordered algebras $EC$ and $M$ are isomorphic. For that, we verify that in $\Pos$
$$
M = Em [EC]\,.
$$
Since both the subposets $M$ and $Em[EC]$ are  closed under the operations, this implies $M\simeq EC$ in $\Sigma$-$\Pos$, as desired.

The inclusion $M\subseteq Em[EC]$ is obvious: given $h\in M$, the corresponding component $c_h \colon G\to C$  of $c$ above lies in $EC$ and fulfils $h= m\cdot c_h$.

Conversely, we prove
$$
Em(g) = m\cdot g \in M \quad \mbox{for each}\quad g\colon G\to C\,.
$$
Since $G$ is a subregular projective, $g = c \cdot g'$ for some morphism  $g'$:
$$
\xymatrix@C=2pc@R=3pc{
n\cdot G \ar[rr]^{u\cdot G} && M\cdot G \ar[dr]_{c} \ar[rr]^{[h]} && K\\
& G \ar[ul]^{\sigma} \ar[ur]_{g'} \ar[rr]_{g} && C \ar[ur]_{m}&
}
$$
Finite abstractness yields an injection $u\colon n\to G$ (where $n$ denotes the discrete poset $\{ 0, \dots , n-1\}$) such that $g'$ factorizes through $u\cdot G$. We denote by $\sigma$ the factorizing morphism.
Then for $h_i=u(i)\in M$ we get $[h] \cdot (u\cdot G) = [h_i]_{i<n}$ . This  yields 
$$
\sigma_{EK} (h_i)_{i<n} =[h_i]_{i<n} \cdot \sigma\,.
$$
We obtain from the above diagram that
$$
\sigma_{EK} (h_i) = m\cdot g\,.
$$
This concludes the proof of $m\cdot g\in Em[EC]$ since $h_i \in M$ and $M$ is closed under $\sigma_{EK}$.

(iii) $\bar \ck$ is closed under  homomorphic images. See \cite{ARO}, Item (3c) of the proof of Theorem 4.8.
\end{proof}

\begin{corollary}\label{C:main} 
An order-enriched category is equivalent to a variety of ordered algebras iff
it has 

(1) Subkernel pairs and reflexive coinserters.

(2) A subeffective, abstractly finite, strong 
 generator with copowers.
\end{corollary}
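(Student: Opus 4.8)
The plan is to deduce this corollary from the preceding theorem---an order-enriched category is a variety iff it is subexact (Definition~\ref{D:subex}) and has a subvarietal generator (Definition~\ref{de1.4})---by exactly the argument that yields the unnumbered corollary after Theorem~\ref{T:main} in the classical case, with Proposition~\ref{P:gener} playing the role of Proposition~\ref{P:eff}. For necessity there is nothing new: a variety of ordered algebras is subexact, so it satisfies (1), and its free algebra $G$ on one generator is an abstractly finite subregular projective strong generator (Example~\ref{E:nec}) which is moreover subeffective (Example~\ref{E:eff}); this is precisely (2).

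For sufficiency I assume (1) and (2) and must produce the two ingredients of the preceding theorem: a subvarietal generator and subexactness. Comparing the hypotheses, the only apparent gaps are that $G$ is assumed \emph{subeffective} rather than \emph{subregular projective}, and that \emph{effective subcongruences} are not postulated. Since Proposition~\ref{P:gener} requires $G$ to be subregularly projective before it can trade subeffectiveness for effective subcongruences, the first task is to upgrade subeffectiveness to subregular projectivity; everything else then follows formally.

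The key step is thus the following. Let $e\colon A\to B$ be any subregular epimorphism and form its subkernel pair $r_0,r_1\colon R\to A$, which exists by (1). As in the classical fact that a regular epimorphism is the coequalizer of its own kernel pair, I would show that $e$ is the coinserter of $r_0,r_1$: the inequality $e\cdot r_0\leq e\cdot r_1$ holds by definition of the subkernel pair, and since $e$ is the coinserter of \emph{some} reflexive pair $f_0,f_1$, which factorizes through $r_0,r_1$ by universality, the factorization clause~(1) of Definition~\ref{D:coins} transfers to $r_0,r_1$, while the order-reflecting clause~(2) is literally unchanged (it mentions only $e$). The subkernel pair is order-reflexive and transitive, hence a subcongruence, so subeffectiveness of $G$ makes $\ck(G,-)$ carry this coinserter to the coinserter of $\ck(G,r_0),\ck(G,r_1)$ in $\Pos$. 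Coinserters in $\Pos$ are surjective on underlying sets, so $\ck(G,e)$ is surjective; that is, every morphism $G\to B$ factorizes through $e$, and $G$ is subregular projective.

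With $G$ now a subvarietal generator, Proposition~\ref{P:gener} applies and, since $G$ is subeffective, yields that $\ck$ has effective subcongruences; together with (1) this is exactly subexactness (Definition~\ref{D:subex}). The preceding theorem then identifies $\ck$ with a variety of ordered algebras, and stability of subregular epimorphisms under pullback need not be checked separately (cf.\ Lemma~\ref{3.24}). I expect the main obstacle to be the key step itself: verifying that a subregular epimorphism coincides with the coinserter of its subkernel pair, where care is needed with clause~(2) of Definition~\ref{D:coins}---the order-reflecting condition has no analogue in the unordered setting and must be tracked explicitly.
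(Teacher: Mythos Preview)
Your proposal is correct and follows the paper's approach, which simply records that the corollary follows from Proposition~\ref{P:gener} together with the preceding theorem. You go further than the paper by making explicit the one nontrivial step the terse proof hides: that subeffectiveness already forces $G$ to be a subregular projective (because a subregular epimorphism is the coinserter of its own subkernel pair, a subcongruence, and coinserters in $\Pos$ are surjective), which is precisely what is needed before Proposition~\ref{P:gener} can be invoked.
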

This follows from Proposition~\ref{P:gener} and the above theorem.

\end{document}